\newcommand{\m}{\mathbf}
\newcommand{\De}{\mathrm{\Delta}}
\newcommand{\Ga}{\mathrm{\Gamma}}
\newcommand*{\mh}{{\mathcal H}}
\newcommand*{\h}{\mid}
\newcommand{\topr}{\textup{\sc ($\top$)}}
\newcommand{\cutr}{\textup{\sc (cut)}}
\newcommand{\axr}{\textup{\sc (ax)}}
\newcommand{\cn}{\textup{\sc (c)}}
\newcommand{\weakr}{\textup{\sc (w)}}
\newcommand{\implrr}{\textup{\sc ($\to$r)}}
\newcommand{\implrrone}{\textup{\sc ($\to$r$_1$)}}
\newcommand{\implrrtwo}{\textup{\sc ($\to$r$_2$)}}
\newcommand{\conjlrone}{\textup{\sc ($\land$l$_1$)}}
\newcommand{\conjlrtwo}{\textup{\sc ($\land$l$_2$)}}
\newcommand{\conjlr}{\textup{\sc ($\land$l)}}
\newcommand{\disjrrone}{\textup{\sc ($\lor$r$_1$)}}
\newcommand{\disjrrtwo}{\textup{\sc ($\lor$r$_2$)}}
\newcommand{\disjlr}{\textup{\sc ($\lor$l)}}
\newcommand{\impllr}{\textup{\sc ($\to$l)}}
\newcommand{\conjrr}{\textup{\sc ($\land$r)}}
\newcommand{\copcone}{\textup{\sc (copc$_1$)}}
\newcommand{\copctwo}{\textup{\sc (copc$_2$)}}
\newcommand{\none}{\textup{\sc (n$_1$)}}
\newcommand{\ntwo}{\textup{\sc (n$_2$)}}
\newcommand{\nthree}{\textup{\sc (n$_3$)}}
\newcommand{\nfour}{\textup{\sc (n$_4$)}}
\newcommand{\copc}{\textup{\sc (copc)}}
\newcommand{\anr}{\textup{\sc (an)}}
\newcommand{\nr}{\textup{\sc (n)}}
\newcommand{\nefr}{\textup{\sc (nef)}}
\newcommand{\seq}{\Rightarrow}
\newcommand{\mpc}{{\sf MPC}}
\def\kn{\kern.1em}
\theoremstyle{plain}
\newtheorem{theorem}{Theorem}[section]
\newtheorem{corollary}{Corollary}[section]
\newtheorem{lemma}{Lemma}[section]
\newtheorem{prop}{Proposition}[section]
\theoremstyle{definition}
\newtheorem{example}{Example}[section]
\numberwithin{equation}{section}
\begin{document}
\begin{frontmatter}
\journal{ArXiv}
\title{Proof Theory for Positive Logic with Weak Negation}
\author[mb]{Marta~B\'ilkov\'a}
\ead{bilkova@cs.cas.cz}
\author[ac]{Almudena~Colacito}
\ead{almudena.colacito@math.unibe.ch}
\address[mb]{The Czech Academy of Sciences, Institute of Computer Science, Prague, Czech Republic}
\address[ac]{Mathematisches Institut, Universit\"at Bern, Bern, Switzerland}

\vspace*{-1.5cm}
\begin{abstract}
Proof-theoretic methods are developed for subsystems of Johansson's logic obtained by extending the positive fragment of intuitionistic logic with weak negations. These methods are exploited to establish properties of the logical systems. In particular, cut-free complete sequent calculi are introduced and used to provide a proof of the fact that the systems satisfy the Craig interpolation property. Alternative versions of the calculi are later obtained by means of an appropriate loop-checking history mechanism. Termination of the new calculi is proved, and used to conclude that the considered logical systems are PSPACE-complete.
\end{abstract}

\begin{keyword}
Minimal propositional logic \sep Weak negation \sep Intuitionistic propositional logic \sep Sequent calculus \sep Terminating sequent calculus \sep Decidability \sep Complexity

\end{keyword}
\end{frontmatter}


\section{Introduction}\label{s:introduction}

Minimal propositional calculus (\emph{Minimalkalk\"ul}, denoted here as \mpc) is the system obtained from the positive fragment of intuitionistic propositional calculus (equivalently, positive logic~\cite{rasiowa1974algebraic}) by adding a unary negation operator satisfying the so-called principle of contradiction (sometimes referred to as \emph{reductio ad absurdum}, e.g., in~\cite{odintsov2008constructive}). This system was introduced in this form by Johansson in 1937~\cite{johansson1937minimalkalkul}, and goes back to Kolmogorov's first formalization of intuitionistic logic~\cite{kolmogorov1925principle}, obtained by discarding \emph{ex falso quodlibet} (\emph{ex falso}, from now on) from the nowadays standard axioms for intuitionistic logic.~A letter from Johansson to Heyting (1935~-~1936) reads~\cite{vandermolen}:
\begin{displayquote}

[\emph{ex falso}] says that once $\neg a$ has been proved, $b$ follows from $a$, even if this had not been the case before.

\end{displayquote} 
This implies that the constructive interpretation of negation (i.e., implication) characteristic of intuitionism may give rise to doubts concerning the legitimacy of \emph{ex falso} as an axiom of intuitionistic logic. More generally, by rejecting \emph{ex falso}, one earns the right to study the notion of contradiction on its own and thereby, the related notion of negation.

The axiomatization proposed by Johansson preserves the whole positive fragment and \emph{most of} the negative fragment of Heyting's intuitionistic logic. As a matter of fact, many important properties of negation provable in Heyting's system remain provable (in some cases, in a slighlty weakened form) in minimal logic. The absence of \emph{ex falso} made Johansson's system the focus of interest in the field of paraconsistency, conceived as the study of those logics which admit inconsistent non-trivial theories. From a standard paraconsistent view, minimal logic still has unfortunate features~\cite{odintsov2008constructive}. In fact, the provability of what we are going to refer to as `negative \emph{ex falso}' $(a \land \neg a) \to \neg b$ makes negation meaningless in inconsistent theories---since every negated formula is provable---hence preserving some of the trivial aspects distinctive of \emph{ex falso}. Interestingly enough, in the setting of positive logic, negative \emph{ex falso} follows already from the assumption that negation is a functional antitone operator, i.e., that it satisfies the contraposition axiom $(a \to b) \to (\neg b \to \neg a)$~\cite{Colacito:Thesis:2016, colacito2016subminimal}. The latter is a theorem of Johansson's logic and one of its main `sources of explosiveness'\footnote{Logical systems in which all the inconsistent theories are trivial are sometimes said to be \emph{explosive}~\cite{odintsov2008constructive}. The fact that negative \emph{ex falso} is seen as a weak form of explosion justifies this statement.}. The two formulas mentioned in this paragraph will play a fundamental r\^ole throughout the paper. These formulas arose as central principles in the semantic investigations of~\cite{Colacito:Thesis:2016, colacito2016subminimal}. There, the class of those logics is studied that extend a basic logic of a unary operator axiomatized by $(a \leftrightarrow b) \to (\neg a \leftrightarrow \neg b)$ over positive logic. The present paper can be seen as a continuation of the study of these logics in a proof-theoretic direction. It is worth mentioning that the system with contraposition as the only negation axiom (here, {\sf CoPC}) has been considered as a theory of subminimal negation before: Hazen~\cite{Haz95}, following a personal communication with Humberstone, arrives at this logics by weakening Johansson's original motivations, avoiding in particular the principle $(a \to\neg a)\to\neg a$. The same logic has also been discussed by Humberstone himself in~\cite[Section 8.33]{Hum11}.\footnote{We thank Allen~P.~Hazen, and consequently Dick de Jongh, for pointing out and discussing this reference.}

Working in a paraconsistent setting and with weaker notions of negation sounds appealing when dealing with phenomena such as descriptions of counterfactual situations or fictional objects, information in a computer data base, negations in natural language, etc. For instance, fragments of intuitionistic logic have been considered to formalize information flow or access control, mostly because of their good balance between expressivity and tractability. A relevant example in this direction can be found looking at the \emph{logic of infons}~\cite{GN11}, used in \emph{Distributed Knowledge Authorization Language} (DKAL): `infons' are statements viewed as containers of information, and infon logic turns out to be a conservative extension of the positive and disjunction-free fragment of intuitionistic logic with certain simple multiagent modalities.

A formal study of the properties of (constructive) negation taken individually has the potential of disclosing unexplored paths in the study of negation in these more applied contexts. In this respect, we would like to point out the possibility of exploiting these studies in the setting of \emph{inquisitive logic}~\cite{groenendijk2009inquisitive, ciardelli2012inquisitive, ciardelli2013inquisitive}. The propositional fragment of inquisitive logic can be regarded as the disjunctive-negative fragment of intuitionistic logic, and negation plays a fundamental r\^ole in inquisitive semantics. Nonetheless, having a univocal notion of negation does not make it easy to model the many different shades of negation within natural language. An attempt to extending the inquisitive logic framework by adding a `weak' negation has been already pursued in~\cite{punvcochavr2015weak}, where to the persistent negation a weaker non-persistent negation is added, with the aim of taking care of problems as `might'-sentences or sentences expressing denial of conditionals. 

This short introduction is meant to outline where the study of \emph{weak} negations and, in particular, our work find their roots and motivations. The search for a `most appropriate' or `significant' notion of constructive negation given a fixed positive setting suggests a broad but well-defined realm of possibility, whose uniform study is typical of algebraic semantics. An algebraic presentation of the logical systems of our interest is the focus of Section~\ref{s:prelim}. This section is a condensed presentation of the results of~\cite{Colacito:Thesis:2016,BCdJ18}, and earlier papers on which these articles are based. After this, the remaining and main part of the paper is concerned with the development of a proof theory for the relevant systems. While this proof-theoretic account lacks uniformity, the sequent calculi that we present happen to be the most natural ones, obtained via a straightforward translation of equations (or equivalently, Hilbert-style axioms) into sequent rules. First, cut-free Gentzen systems are presented in Section~\ref{s:2}, and used in Section~\ref{s:3} to argue for fundamental results about the logical systems, such as Craig's interpolation, disjunction property, decidability. Later, the non-termination issues affecting the Gentzen systems are solved by means of a loop-checking history mechanism, presented and justified in Section~\ref{s:4}. The latter concludes the paper, as the termination property of the modified calculi allows us to improve the decidability result from Section~\ref{s:3} to an argument that the considered logical systems are decidable in PSPACE.

\section{Preliminaries}\label{s:prelim}

In this preliminary section we introduce the setting and present the main technical tools that will be used throughout the paper.

An algebraic semantics for minimal logic is given by the variety of contrapositionally complemented lattices. A \emph{contrapositionally complemented lattice} is an algebraic structure $\langle A, \land, \lor, \to, 1, \neg \rangle$, where $\langle A, \land, \lor, \to, 1 \rangle$ is a relatively pseudo-complemented lattice\footnote{In the sense of~\cite{rasiowa1974algebraic}; alternatively called Brouwerian algebras (e.g.,~\cite{moraschinisurjectivity}) or lattices (e.g.,~\cite{birkhoff1967lattice}), generalized Heyting algebras (e.g.,~\cite{ertola2007compatible, ertola2009subminimal}), implicative lattices (e.g.,~\cite{odintsov2008constructive}).} and the unary fundamental operation $\neg$ satisfies the identity $(p \to \neg q) \to (q \to \neg p) \approx 1$. A first algebraic account of Johansson's logic can be found in Rasiowa's main work on non-classical logics~\cite{rasiowa1974algebraic}.  The variety presented by Rasiowa is term-equivalent to the variety of relatively pseudo-complemented lattices with a negation operator defined by the algebraic formulation of the principle of non-contradiction $(p \to q) \land (p \to \neg q) \to \neg p \approx 1$, originally employed in Johansson's axiomatization. Observe that Heyting algebras can be seen as contrapositionally complemented lattices where $\neg 1$ is a distinguished bottom element $0$. 

As stated in the introduction, we focus on subsystems of minimal logic obtained by weakening Johansson's notion of negation. Algebraically, we start from a class of algebraic structures $\langle A, \land, \lor, \to, 1, \neg \rangle$ that generalize contrapositionally complemented lattices~\cite{rasiowa1974algebraic}. Again $\langle A, \land, \lor, \to, 1 \rangle$ is a relatively pseudo-complemented lattice and the unary operation $\neg$ is restricted only by the identity: 
\begin{equation}\label{eq:n}
(p \leftrightarrow q) \to ( \neg p \leftrightarrow \neg q) \approx 1. 
\end{equation}
We call these structures \emph{N-algebras}, and we refer to the equivalent logical system as {\sf N}. Note that this class of algebras play a fundamental r\^ole in the attempts of defining a connective over positive logic. In fact, the considered equation states that the function $\neg$ is a \emph{compatible function} (or \emph{compatible connective}), in the sense that every
congruence of $\langle A, \land, \lor, \to, 1 \rangle$ is a congruence of $\langle A, \land, \lor, \to, 1, \neg \rangle$. This is somehow considered a minimal requirement when introducing a new connective over a fixed setting. A study of compatible connectives on Heyting algebras can be found in~\cite{caicedo2001algebraic}, in the attempt of defining and studying new connectives over intuitionistic logic. In the setting of positive logic, a similar algebraic approach is carried out in~\cite{ertola2007compatible}. 

Given the variety of N-algebras, we are particularly interested in two of its subvarieties that were implicitly considered in Section~\ref{s:introduction}, and are defined, respectively, by the following identities: 

\begin{equation}\label{eq:nef}
(p \land \neg p) \to \neg q \approx 1
\end{equation}

\begin{equation}\label{eq:copc}
(p \to q) \to (\neg q \to \neg p) \approx 1.
\end{equation}

\noindent Observe that~\ref{eq:nef} and~\ref{eq:copc} are algebraic formulations of, respectively, negative \emph{ex falso} and the contraposition axiom, and for this reason we shall refer to the corresponding logics as \emph{negative ex falso} logic ({\sf NeF}) and \emph{contraposition} logic ({\sf CoPC}). Here, we also consider contrapositionally complemented lattices in terms of their term-equivalent variety of N-algebras defined by $(p \to q) \to (\neg q \to \neg p) \approx 1$ plus $(p \to \neg p) \to \neg p \approx 1$. It was proved in~\cite{Colacito:Thesis:2016,colacito2016subminimal,BCdJ18} that the following relations hold between the considered logical systems:
\[
{\sf N} \subset {\sf NeF} \subset {\sf CoPC} \subset {\sf MPC},
\]
where the strict inclusion ${\sf L}_1 \subset {\sf L}_2$ means that every theorem of ${\sf L}_1$ is a theorem of ${\sf L}_2$, and at least one theorem of ${\sf L}_2$ is not provable in ${\sf L}_1$.

In the remainder of this paper we focus on a proof-theoretic study of (the logics corresponding to) these varieties. We will talk interchangeably about logical systems and their corresponding algebraic presentation, assuming an algebraic completeness result that goes beyond the scope of this paper and is therefore not included here. In order to keep the structure of the paper as clean as possible, the reader interested in a broader and introductory account of the topic from an algebraic perspective should refer to~\cite{Colacito:Thesis:2016}. It is worth mentioning that in~\cite{Colacito:Thesis:2016, colacito2016subminimal}, the presentation and the study of these logical systems are developed also in a Kripke-style semantical framework. Finally,~\cite{BCdJ18} is concerned with a deeper algebraic account of these logics, and a first study of their modal companions.


\section{Sequent Calculi}\label{s:2}

Following the notation of~\cite{troelstra2000basic}, we refer to the Gentzen systems introduced here as \textbf{G3}-systems. The letters $p, q, \ldots$ range over propositional variables, the Greek letters $\alpha, \beta, \ldots, \varphi, \ldots$ range over formulas, and $\Ga, \De, \ldots$ range over finite multisets of formulas. We write $\Ga, \De$ for the multiset obtained as the union of $\Ga$ and $\De$. We consider the propositional language $\mathcal{L}({\sf Prop})$, where ${\sf Prop}$ is a countable set of propositional variables, generated by the following grammar: 
\[
p \mid \varphi \land \varphi \mid \varphi \lor \varphi \mid \varphi \to \varphi \mid \neg \varphi
\]  
\noindent 
where $p \in {\sf Prop}$. We consider $\top$ as a constant in the language, while we omit $\bot$. We call a formula \emph{positive} if it contains only connectives from $\{\land, \lor, \to, \top\}$, and we refer to the positive fragment of intuitionistic logic as \emph{positive logic}. We define the notion of \emph{weight} (or \emph{complexity}) $w(\varphi)$ of a formula $\varphi$ inductively as follows: 

\begin{itemize}

\item $w(p) = w(\top) = 1$, for every atom $p$; 

\item $w(\varphi_1 \circ \varphi_2) = w(\varphi_1) + w(\varphi_2) + 1$, where $\circ \in \{\land, \lor, \to\}$;

\item $w(\neg\varphi) = w(\varphi) + 1$.

\end{itemize}
\begin{figure}[h]
\begin{mdframed}[style=mdfexample1]
\[
\textbf{Logical Rules for Positive Connectives}
\]
\[
 \begin{array}{ccccccccc}
 \infer[\axr]{\Ga, p \Rightarrow p}{} & 
 \qquad &
 \infer[{\topr}]{\Ga \Rightarrow \top}{} &
 \end{array}
\]
\[
 \begin{array}{ccccccccc}
 \infer[\implrr]{\Ga \Rightarrow \alpha\to\beta}{\Ga, \alpha \Rightarrow \beta} 
 \qquad & 
 \infer[\impllr]{\Ga, \alpha\to\beta \Rightarrow \varphi}{\Ga, \alpha\to\beta \Rightarrow \alpha \qquad \Ga, \beta \Rightarrow \varphi} 
 \end{array}
\]
\[
 \begin{array}{ccccccccc}
 \infer[\conjrr]{\Ga \Rightarrow \alpha\land\beta}{\Ga \Rightarrow \alpha \qquad \Ga \Rightarrow \beta} 
 \end{array}
\]
\[
 \begin{array}{ccccccccc}
 \infer[\conjlr]{\Ga, \alpha\land\beta \Rightarrow \varphi}{\Ga, \alpha, \beta \Rightarrow \varphi} 
 \end{array}
\]
\[
 \begin{array}{ccccccccc}
 \infer[\disjrrone]{\Ga \Rightarrow \alpha\lor\beta}{\Ga \Rightarrow \alpha} 
 \qquad &
 \infer[\disjrrtwo]{\Ga \Rightarrow \alpha\lor\beta}{\Ga \Rightarrow \beta} 
 \end{array}
\]
\[
 \begin{array}{ccccccccc}
 \infer[\disjlr]{\Ga, \alpha\lor\beta \Rightarrow \varphi}{\Ga, \alpha \Rightarrow \varphi \qquad \Ga, \beta \Rightarrow \varphi} 
 \end{array}
\]
\[
\textbf{Logical Rules for Negation}
\]
\[
 \begin{array}{ccccccccc}
 \infer[\nr]{\Ga, \neg\alpha \Rightarrow \neg\beta}{\Ga, \neg\alpha, \beta \Rightarrow \alpha \qquad \Ga, \neg\alpha, \alpha \Rightarrow \beta} 
 \end{array}
\]
\[
 \begin{array}{ccccccccc}
 \infer[\nefr]{\Ga, \neg\alpha \Rightarrow \neg\beta}{\Ga, \neg\alpha \Rightarrow \alpha} 
 \end{array}
\]
\[
 \begin{array}{ccccccccc}
 \infer[\copc]{\Ga, \neg\alpha \Rightarrow \neg\beta}{\Ga, \neg\alpha, \beta \Rightarrow \alpha} 
 \qquad & 
 \infer[\anr]{\Ga \Rightarrow \neg\alpha}{\Ga, \alpha \Rightarrow \neg\alpha} 
 \end{array}
\]
\end{mdframed}
\caption{The systems $\m{G3}$} \label{fig:g}
\end{figure}

\begin{figure}[h]
\begin{mdframed}[style=mdfexample1]
\[
 \begin{array}{ccccccccc}
 \infer[\weakr]{\Ga, \alpha \Rightarrow \varphi}{\Ga \Rightarrow \varphi} 
 \qquad & 
 \infer[\cn]{\Ga, \alpha \Rightarrow \varphi}{\Ga, \alpha, \alpha \Rightarrow \varphi} 
 \qquad &
 \infer[\cutr]{\Ga, \De \Rightarrow \varphi}{\Ga \Rightarrow \alpha \quad \De, \alpha \Rightarrow \varphi} 
 \end{array}
\]
\end{mdframed}
\caption{Structural rules} \label{fig:sr}
\end{figure}

In Figures~\ref{fig:g} and~\ref{fig:sr}, four different sequent systems are presented. Keeping the rules for the positive connectives and the structural rules fixed, the rule $\nr$ defines a proof system for N-algebras; by adding the rule $\nefr$ and by substituting the rule $\nr$ with $\copc$, we obtain systems for the two proper subvarieties of N-algebras defined, respectively, by~\ref{eq:nef} and~\ref{eq:copc}. Finally, by adding the rule $\anr$ to the system obtained with $\copc$, we obtain a sequent proof system for contrapositionally complemented lattices. 

Sequent proof systems for \mpc\ were already introduced, e.g., in~\cite{troelstra2000basic}, using a different presentation of \mpc. In fact, \mpc\ is often formulated with the negation $\neg \varphi$ as a defined connective $\varphi \to f$, where $f$ is a distinguished propositional variable in the language. This formulation naturally suggests a sequent calculus for \mpc\, obtained by discarding the axiom ruling the constant $\bot$ from the system for intuitionistic propositional logic.

We call the \emph{height} of a derivation $\vdash \Ga \Rightarrow \varphi$ the number of inference steps of a maximal branch within a proof tree with root $\Ga \Rightarrow \varphi$. A rule is said to be \emph{admissible} if, whenever its premises are derivable in the system, so is the conclusion, and \emph{height-preserving admissible} if, whenever its premises are derivable via a proof of height (at most) $n \in \mathbb{N}$, then so is the conclusion. We say that an inference rule is \emph{invertible} if, for every instance of the rule, the premises are derivable if, and only if, the conclusion is, and \emph{height-preserving invertible} if the two derivations have the same height. It can be proved that all the rules in Figure~\ref{fig:g} except $\disjrrone$, $\disjrrtwo$, $\impllr$, $\nr$, $\nefr$ and $\copc$ are height-preserving invertible. 

The arguments for admissibility of weakening $\weakr$ and contraction $\cn$ are standard, and for the positive part of the calculus they can be adopted directly from a number of sources. Therefore we concentrate on the negative rules only, and refer the reader to the textbook~\cite{troelstra2000basic} for the remaining cases. The full proofs can be found in~\cite{Colacito:Thesis:2016}. 

\begin{prop}\label{prop:w}
The following weakening rule is height-preserving admissible:
\[
\begin{array}{ccccccccc}
 \infer[\weakr]{\Ga, \alpha \Rightarrow \varphi}{\Ga \Rightarrow \varphi} 
\end{array}
\]
\end{prop}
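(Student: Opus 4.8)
The plan is to prove the statement by induction on the height $n$ of the assumed derivation of $\Ga \Rightarrow \varphi$, establishing that for an arbitrary formula $\psi$ one can build a derivation of $\Ga, \psi \Rightarrow \varphi$ of height at most $n$. (I would write $\psi$ for the weakening formula throughout, to keep it apart from the metavariables $\alpha, \beta$ that appear inside the negation rules.) First I would dispatch the base case: if the derivation is a single instance of $\axr$ or $\topr$, then inserting $\psi$ into the antecedent multiset still yields an instance of the same axiom, with the height unchanged.

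For the inductive step I would do a case distinction on the last rule $R$ of the derivation. The observation that makes the whole argument go through uniformly is that every rule in Figures~\ref{fig:g} and~\ref{fig:sr} is context-sharing: the side context of the conclusion reappears — possibly together with the principal and auxiliary formulas — in each premise. So in every case the recipe is the same: apply the induction hypothesis to each premise to insert $\psi$ into its antecedent (which keeps the height strictly below $n$), and then re-apply $R$ with the enlarged side context $\Ga, \psi$ to recover $\Ga, \psi \Rightarrow \varphi$ at height at most $n$. For the positive-connective rules and the structural rules $\cn$ and $\cutr$ this is exactly the textbook argument, so I would merely cite~\cite{troelstra2000basic} and instead spell out the four negation rules, one short line each: for $\nr$, push $\psi$ into both premises $\Ga, \neg\alpha, \beta \Rightarrow \alpha$ and $\Ga, \neg\alpha, \alpha \Rightarrow \beta$ and fire $\nr$ again; for $\nefr$ and $\copc$, push $\psi$ into the single premise and reapply the rule; for $\anr$, push $\psi$ into $\Ga, \alpha \Rightarrow \neg\alpha$ and reapply $\anr$. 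This simultaneously covers all four calculi, since these rules are precisely what distinguishes them.

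I do not expect a genuine obstacle here, and that is itself the point worth stressing: the argument is painless exactly because the systems are in G3 style, so no rule carries a side condition that the extra antecedent formula $\psi$ could violate, and because antecedents are multisets a possible clash of $\psi$ with a formula already present is harmless — it simply produces a repetition, which $\cn$ (once its admissibility is in place) would absorb anyway, but which does no harm even before that. The only point deserving a moment of care is the height convention at axiom leaves, so that "adding $\psi$ to an axiom" is genuinely height-preserving; this is immediate once the convention is fixed.
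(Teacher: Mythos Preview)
Your proposal is correct and follows essentially the same approach as the paper: induction on the height of the derivation, with the positive cases deferred to~\cite{troelstra2000basic} and the four negation rules handled by applying the induction hypothesis to the premises and reapplying the rule. One small slip: you list $\cn$ and $\cutr$ among the cases, but these are not rules of the base $\m{G3}$ systems---they are themselves being shown admissible---so the induction only ranges over the logical rules of Figure~\ref{fig:g}.
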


\begin{proof}
The proof goes by induction on the height of a derivation of the premise sequent $\Ga \Rightarrow \varphi$. 

Suppose that the derivation (of height $n$) ends with an application of $\nr$
\[
\begin{array}{ccccccccc}
\infer[\nr]{\Ga, \neg\beta \Rightarrow \neg\gamma}{\Ga, \neg\beta, \beta \Rightarrow \gamma \qquad \Ga, \neg\beta, \gamma \Rightarrow \beta} 
\end{array}
\] 
By the induction hypothesis twice, we have derivations
\[
\vdash \Ga, \neg\beta, \beta, \alpha \Rightarrow \gamma\text{ and }\vdash \Ga, \neg\beta, \gamma, \alpha \Rightarrow \beta
\]
of height, respectively, at most $m_1$ and at most $m_2$, where $m_1$ and $m_2$ are the height of derivations of the corresponding premises. By using $\nr$, we get a derivation of height at most $n$ of the sequent $\Ga, \neg\beta, \alpha \Rightarrow \neg\gamma$.

Suppose that the derivation ends with an application of $\nefr$
\[
\begin{array}{ccccccccc}
\infer[\nefr]{\Ga, \neg\beta \Rightarrow \neg\gamma}{\Ga, \neg\beta \Rightarrow \beta} 
\end{array}
\] 
By the induction hypothesis, we have a derivation
\[
\vdash \Ga, \neg\beta, \alpha \Rightarrow \beta
\]
of height at most $n-1$. By using $\nefr$, we get a derivation of height at most $n$ of the sequent $\Ga, \neg\beta, \alpha \Rightarrow \neg\gamma$.

Suppose that the derivation ends with an application of $\copc$
\[
\begin{array}{ccccccccc}
\infer[\copc]{\Ga, \neg\beta \Rightarrow \neg\gamma}{\Ga, \neg\beta, \gamma \Rightarrow \beta} 
\end{array}
\] 
By the induction hypothesis, we have a derivation
\[
\vdash \Ga, \neg\beta, \gamma, \alpha \Rightarrow \beta
\]
of height at most $n-1$. By using $\copc$, we get a derivation of height at most $n$ of the sequent $\Ga, \neg\beta, \alpha \Rightarrow \neg\gamma$.

Finally, suppose that the derivation ends with an application of $\anr$
\[
\begin{array}{ccccccccc}
\infer[\anr]{\Ga \Rightarrow \neg\beta}{\Ga, \beta \Rightarrow \neg\beta} 
\end{array}
\] 
By the induction hypothesis, we have a derivation
\[
\vdash \Ga, \beta, \alpha \Rightarrow \neg\beta
\]
of height at most $n-1$. By using $\anr$, we get a derivation of height at most $n$ of the sequent $\Ga, \alpha \Rightarrow \neg\beta$.
\end{proof}

\begin{prop}
The following contraction rule is height-preserving admissible:
\[
\begin{array}{ccccccccc}
 \infer[\cn]{\Ga, \alpha \Rightarrow \varphi}{\Ga, \alpha, \alpha \Rightarrow \varphi} 
\end{array}
\]
\end{prop}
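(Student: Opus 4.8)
The plan is to proceed by induction on the height of a derivation of the premise sequent $\Ga, \alpha, \alpha \Rightarrow \varphi$, in exactly the same style as the proof of Proposition~\ref{prop:w}. As with weakening, the positive-connective cases and the structural-rule cases are standard and can be lifted directly from~\cite{troelstra2000basic}, so the real work is confined to the four negation rules $\nr$, $\nefr$, $\copc$, $\anr$. For each such rule, the base-case-and-easy-step structure is: if neither occurrence of $\alpha$ being contracted is the principal formula of the final inference, we simply apply the induction hypothesis to the premise(s)---which again contain two copies of $\alpha$ in their antecedents---and re-apply the same rule to obtain a derivation of $\Ga, \alpha \Rightarrow \varphi$ of height at most $n$. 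The only subtlety arises when one of the contracted copies of $\alpha$ is principal in the last step.

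For the negation rules, principal formulas appear on the left only in $\nr$, $\copc$ (the formula $\neg\alpha$) and never in $\nefr$ (whose principal formula $\neg\gamma$ is on the right); $\anr$ likewise has its principal formula on the right. So the genuinely interesting case is when $\alpha = \neg\beta$ and the derivation ends in, say,
\[
\infer[\nr]{\Ga, \neg\beta, \neg\beta \Rightarrow \neg\gamma}{\Ga, \neg\beta, \neg\beta, \beta \Rightarrow \gamma \qquad \Ga, \neg\beta, \neg\beta, \gamma \Rightarrow \beta}
\]
where we wish to contract the two displayed copies of $\neg\beta$. Here I would first apply the induction hypothesis to each premise (each of which has two copies of $\neg\beta$ in its antecedent, at strictly smaller height), obtaining $\vdash \Ga, \neg\beta, \beta \Rightarrow \gamma$ and $\vdash \Ga, \neg\beta, \gamma \Rightarrow \beta$ of height at most the heights of the respective premises, and then re-apply $\nr$ to conclude $\vdash \Ga, \neg\beta \Rightarrow \neg\gamma$ at height at most $n$. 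The case of $\copc$ is identical but with a single premise, and $\nefr$ and $\anr$ require no special handling at all since their principal formulas are on the right, where no contraction is being performed. In every case the height bound is preserved because the induction hypothesis is height-preserving and we re-use the same rule instance shape.

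The main obstacle---such as it is---is not in the negation fragment but in the interaction with the positive rules, specifically the implication-left rule $\impllr$, where the principal formula $\alpha\to\beta$ is copied into one premise; there one needs the already-established height-preserving invertibility of the relevant rules (noted in the paragraph preceding Proposition~\ref{prop:w}) together with height-preserving admissibility of weakening (Proposition~\ref{prop:w}) to massage premises into the right shape before invoking the induction hypothesis. But since the paper explicitly defers these positive-fragment cases to~\cite{troelstra2000basic}, the burden here is only to check that adding the negation rules introduces no new difficulty---and it does not, because none of the negation rules duplicates a left-principal formula into its premises in a way that would obstruct the straightforward induction.
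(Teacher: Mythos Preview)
Your treatment of the negation rules is essentially the paper's: split on whether the contracted formula is principal, apply the induction hypothesis to the premise(s), and re-apply the rule. Two points are worth correcting, though neither invalidates the argument.

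First, the paper does \emph{not} induct on height alone: it uses the lexicographic pair $(w(\alpha),n)$, and explicitly notes (in a footnote) that this is needed for the positive rules, specifically the $\conjlr$ case with $\alpha$ principal---not $\impllr$, as you suggest. In $\impllr$ the principal formula $\alpha\to\beta$ \emph{is} copied into the left premise, so a plain height induction handles it directly; it is $\conjlr$ and $\disjlr$ that discard the principal formula and hence force either the pair induction (the paper's route) or an appeal to height-preserving invertibility (your route). Both work, but your diagnosis of where the obstacle lies is inverted.

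Second, $\nefr$ \emph{does} have a left-principal formula: the displayed $\neg\alpha$ is required in the conclusion and copied into the premise. The paper accordingly treats the case where the contracted formula is this $\neg\alpha$ explicitly. You are right, however, that the argument in that case is indistinguishable from the non-principal case (apply the induction hypothesis to the single premise and re-apply $\nefr$), so the omission is harmless.
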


\begin{proof}
The proof goes by induction (with respect to the lexicographic order) on the pair $(n,m)$, where $n=w(\alpha)$ and $m$ is the height of a derivation of $\Ga, \alpha, \alpha \Rightarrow \varphi$.\footnote{The induction on the ordered pair is needed for the positive rules, e.g., the $\conjlr$-case with $\alpha$ principal (see~\cite{troelstra2000basic}; cf.\ \cite{Colacito:Thesis:2016}).} 

Suppose that the derivation ends with an application of $\nr$ where $\alpha$ is not a principal formula:
\[
\begin{array}{ccccccccc}
\infer[\nr]{\Ga, \neg\beta, \alpha, \alpha \Rightarrow \neg\gamma}{\Ga, \neg\beta, \beta, \alpha, \alpha \Rightarrow \gamma \qquad \Ga, \neg\beta, \gamma, \alpha, \alpha \Rightarrow \beta} 
\end{array}
\] 
By the induction hypothesis twice, we have derivations
\[
\vdash \Ga, \neg\beta, \beta, \alpha \Rightarrow \gamma\text{ and }\vdash \Ga, \neg\beta, \gamma, \alpha \Rightarrow \beta
\]
of height, respectively, at most $m_1$ and at most $m_2$, where $m_1$ and $m_2$ are the height of derivations of the corresponding premises. By using $\nr$, we get a derivation of height at most $n$ of the sequent $\Ga, \neg\beta, \alpha \Rightarrow \neg\gamma$.

Suppose that the derivation ends with an application of $\nr$ where $\alpha$ is a principal formula $\neg\alpha_1$:
\[
\begin{array}{ccccccccc}
\infer[\nr]{\Ga, \neg\alpha_1, \neg\alpha_1 \Rightarrow \neg\gamma}{\Ga, \neg\alpha_1, \neg\alpha_1, \alpha_1 \Rightarrow \gamma \qquad \Ga, \neg\alpha_1, \neg\alpha_1, \gamma \Rightarrow \alpha_1} 
\end{array}
\] 
By the induction hypothesis twice, we have derivations
\[
\vdash \Ga, \neg\alpha_1, \alpha_1 \Rightarrow \gamma\text{ and }\vdash \Ga, \neg\alpha_1, \gamma \Rightarrow \alpha_1
\]
of height, respectively, at most $m_1$ and at most $m_2$, where $m_1$ and $m_2$ are the height of derivations of the corresponding premises. By using $\nr$, we get a derivation of height at most $n$ of the sequent $\Ga, \neg\alpha_1 \Rightarrow \neg\gamma$.

Suppose that the derivation ends with an application of $\nefr$ where $\alpha$ is not a principal formula:
\[
\begin{array}{ccccccccc}
\infer[\nefr]{\Ga, \neg\beta, \alpha, \alpha \Rightarrow \neg\gamma}{\Ga, \neg\beta, \alpha, \alpha \Rightarrow \beta} 
\end{array}
\] 
By the induction hypothesis, we have a derivation
\[
\vdash \Ga, \neg\beta, \alpha \Rightarrow \beta
\]
of height at most $n-1$. By using $\nefr$, we get a derivation of height at most $n$ of the sequent $\Ga, \neg\beta, \alpha \Rightarrow \neg\gamma$.

Suppose that the derivation ends with an application of $\nefr$ where $\alpha$ is a principal formula $\neg\alpha_1$:
\[
\begin{array}{ccccccccc}
\infer[\nefr]{\Ga, \neg\alpha_1, \neg\alpha_1 \Rightarrow \neg\gamma}{\Ga, \neg\alpha_1, \neg\alpha_1 \Rightarrow \alpha_1} 
\end{array}
\] 
By the induction hypothesis, we have a derivation
\[
\vdash \Ga, \neg\alpha_1 \Rightarrow \alpha_1
\]
of height at most $n-1$. By using $\nefr$, we get a derivation of height at most $n$ of the sequent $\Ga, \neg\alpha_1 \Rightarrow \neg\gamma$.

Suppose that the derivation ends with an application of $\copc$ where $\alpha$ is not a principal formula:
\[
\begin{array}{ccccccccc}
\infer[\copc]{\Ga, \neg\beta, \alpha, \alpha \Rightarrow \neg\gamma}{\Ga, \neg\beta, \gamma, \alpha, \alpha \Rightarrow \beta} 
\end{array}
\] 
By the induction hypothesis, we have a derivation
\[
\vdash \Ga, \neg\beta, \gamma, \alpha \Rightarrow \beta
\]
of height at most $n-1$. By using $\copc$, we get a derivation of height at most $n$ of the sequent $\Ga, \neg\beta, \alpha \Rightarrow \neg\gamma$.

Suppose that the derivation ends with an application of $\copc$ where $\alpha$ is a principal formula $\neg\alpha_1$:
\[
\begin{array}{ccccccccc}
\infer[\copc]{\Ga, \neg\alpha_1, \neg\alpha_1 \Rightarrow \neg\gamma}{\Ga, \neg\alpha_1, \neg\alpha_1, \gamma \Rightarrow \alpha_1} 
\end{array}
\] 
By the induction hypothesis, we have a derivation
\[
\vdash \Ga, \neg\alpha_1, \gamma \Rightarrow \alpha_1
\]
of height at most $n-1$. By using $\copc$, we get a derivation of height at most $n$ of the sequent $\Ga, \neg\alpha_1 \Rightarrow \neg\gamma$.

Finally, suppose that the derivation ends with an application of $\anr$
\[
\begin{array}{ccccccccc}
\infer[\anr]{\Ga, \alpha, \alpha \Rightarrow \neg\beta}{\Ga, \beta, \alpha, \alpha \Rightarrow \neg\beta} 
\end{array}
\] 
By the induction hypothesis, we have a derivation
\[
\vdash \Ga, \beta, \alpha \Rightarrow \neg\beta
\]
of height at most $n-1$. By using $\anr$, we get a derivation of height at most $n$ of the sequent $\Ga, \alpha \Rightarrow \neg\beta$.
\end{proof}

A reader used to dealing with the sequent calculus for intuitionistic logic won't find it hard to believe the following fundamental result~\cite{Colacito:Thesis:2016}.

\begin{theorem}[Soundness and Completeness]\label{t:completeness}
Given a finite multiset of formulas $\Ga, \varphi$, the sequent $\Ga \Rightarrow \varphi$ is derivable in the \textbf{G3}-system if, and only if, $\bigwedge \Ga \to \varphi \approx 1$ is a valid equation in the corresponding variety of N-algebras.
\end{theorem}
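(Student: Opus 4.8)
The plan is to establish the two directions separately, treating the four calculi in parallel, since they differ only in their negation rules.

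\medskip
\noindent\emph{Soundness (``only if'').} Reading a sequent $\Delta\seq\psi$ as the equation $\bigwedge\Delta\to\psi\approx 1$, I would argue by induction on the height of a derivation of $\Gamma\seq\varphi$; it suffices to check that each rule of the calculus at hand preserves validity in its associated variety of N-algebras. The axioms $\axr$ and $\topr$ are immediate, and the positive logical rules together with $\weakr$, $\cn$ and $\cutr$ are sound already over relatively pseudo-complemented lattices, which is classical (cf.\ \cite{troelstra2000basic}); so only the negation rules need attention. Fix an algebra of the relevant variety, an assignment, and abbreviate by $c$, $a$, $b$ the values of $\bigwedge\Gamma$, $\alpha$, $\beta$. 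For $\nr$: residuation turns the premises into $c\wedge\neg a\le(a\to b)\wedge(b\to a)$, and the compatibility identity \ref{eq:n} gives $(a\to b)\wedge(b\to a)\le(\neg a\to\neg b)$, so $c\wedge\neg a\le(\neg a\to\neg b)$, i.e.\ $c\wedge\neg a\le\neg b$. For $\nefr$: the premise gives $c\wedge\neg a\le a$, hence $c\wedge\neg a\le a\wedge\neg a\le\neg b$ by \ref{eq:nef}. For $\copc$: the premise gives $c\wedge\neg a\le(b\to a)\le(\neg a\to\neg b)$ by \ref{eq:copc}, so $c\wedge\neg a\le\neg b$. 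For $\anr$, which occurs only in the calculus for contrapositionally complemented lattices, where $(p\to\neg p)\to\neg p\approx 1$ holds: the premise gives $c\le(a\to\neg a)\le\neg a$. In each case precisely the identity defining the variety attached to the calculus is used; note that $\nr$ is sound over every N-algebra, hence may be used in the {\sf NeF}-calculus as well, whereas $\copc$ and $\anr$ belong to the stronger calculi, in agreement with ${\sf N}\subset{\sf NeF}\subset{\sf CoPC}\subset{\sf MPC}$.

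\medskip
\noindent\emph{Completeness (``if'').} By the algebraic completeness invoked in Section~\ref{s:prelim}, validity of $\bigwedge\Gamma\to\varphi\approx 1$ in the variety is equivalent to provability of $\bigwedge\Gamma\to\varphi$ in the corresponding Hilbert-style calculus, so it is enough to show that $\m{G3}$ derives everything that calculus proves. I would first record the routine facts that $\psi\seq\psi$ is derivable for every $\psi$ (induction on $w(\psi)$) and that modus ponens is simulated by $\weakr$, $\impllr$ and $\cutr$: from $\vdash\seq\alpha$ and $\vdash\seq\alpha\to\beta$ one obtains $\vdash\alpha\to\beta\seq\beta$, hence $\vdash\seq\beta$. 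Next, one derives each axiom: the positive axioms in the standard way, and the negation axioms by one application of the matching rule above positive sub-derivations --- $(p\leftrightarrow q)\to(\neg p\leftrightarrow\neg q)$ from $\nr$, $(p\wedge\neg p)\to\neg q$ from $\nefr$, $(p\to q)\to(\neg q\to\neg p)$ from $\copc$, and $(p\to\neg p)\to\neg p$ from $\anr$ (e.g.\ $\copc$ applied to the derivable $p\to q,\neg q,p\seq q$ yields $p\to q,\neg q\seq\neg p$, whence two applications of $\implrr$). Finally, $\vdash\seq\bigwedge\Gamma\to\varphi$ gives $\vdash\Gamma\seq\varphi$ by stripping the implication and the conjunction (using $\weakr$, $\conjrr$, $\impllr$, $\cutr$, or the invertibility of the relevant rules), and conversely $\vdash\Gamma\seq\varphi$ gives $\vdash\seq\bigwedge\Gamma\to\varphi$ via $\conjlr$ and $\implrr$; this also shows the two phrasings of the ``only if'' direction coincide.

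\medskip
\noindent\emph{Where the work is.} No conceptual difficulty arises: the delicate points are purely bookkeeping --- pairing each negation rule with exactly the right subvariety and verifying the handful of residuation-plus-identity inequalities in the soundness part. If one prefers not to cite the external algebraic completeness, the completeness direction can instead be obtained by a Lindenbaum--Tarski construction carried out directly on $\m{G3}$ (with $\cutr$ ensuring that interderivability is a congruence, whose $\neg$-clause is supplied by $\nr$, resp.\ $\copc$, and whose quotient satisfies the defining identity of each subvariety by $\nefr$, $\copc$, $\anr$ as appropriate); this congruence-and-quotient check is the one place where genuine care is needed.
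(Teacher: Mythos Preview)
Your argument is correct and is the standard route: check each rule preserves validity in the intended variety (the residuation-plus-identity computations you give for $\nr$, $\nefr$, $\copc$, $\anr$ are exactly right), and for the converse go through the Hilbert system via the algebraic completeness the paper already assumes in Section~\ref{s:prelim}. Note, however, that the paper does not actually prove Theorem~\ref{t:completeness}: it merely states it and defers the details to~\cite{Colacito:Thesis:2016}, so there is no in-paper proof to compare against; your write-up supplies precisely the kind of verification that reference would contain.
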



\section{Elimination of Cut}\label{s:3}

It is often said that finding the right (i.e., sound and complete) set of rules for a logical system requires a certain amount of ingenuity. In order to carry out proof search in the sequent system, and to use the calculus to understand the corresponding logic, we seek to eliminate the cut rule. The family of calculi presented in Section~\ref{s:2} is obtained by translating in the most natural way equations into sequent rules, and consists of `good' systems, in the sense that they are cut-free. We don't go into the details of the proof of cut admissibility, that are somehow standard and can be found in full in~\cite{Colacito:Thesis:2016}. We will however outline the two interesting cases in the inductive proof---concerning the negative rules---and later focus on some relevant consequences ensuing from cut elimination.

\begin{theorem}
The following cut rule is admissible:
\[
\begin{array}{ccccccccc}
\infer[\cutr]{\Ga, \De \Rightarrow \varphi}{\Ga \Rightarrow \alpha \qquad \De, \alpha \Rightarrow \varphi} 
\end{array}
\]
\end{theorem}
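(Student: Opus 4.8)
The plan is to prove cut admissibility by Gentzen's standard double induction, using the two preceding propositions --- height-preserving admissibility of $\weakr$ and $\cn$ --- together with the height-preserving invertibility of the non-branching logical rules where it helps. Precisely, I would argue by induction on lexicographically ordered pairs $(w(\alpha),\, h_1 + h_2)$, where $\alpha$ is the cut formula and $h_1, h_2$ are the heights of the given derivations of the premises $\Ga \Rightarrow \alpha$ and $\De, \alpha \Rightarrow \varphi$; the outer component is the cut rank, the inner one the cut height. The argument splits on the last rules applied in the two premise derivations.

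The cases where one premise is an initial sequent are immediate: if $\Ga \Rightarrow \alpha$ is an instance of $\axr$ (so $\alpha$ is an atom occurring in $\Ga$), the conclusion follows from $\De, \alpha \Rightarrow \varphi$ by weakening; if it is an instance of $\topr$ (so $\alpha = \top$), one first deletes the antecedent occurrence of $\top$, which is admissible since $\top$ is never principal on the left, and then weakens; and symmetrically when the right premise is initial. When $\alpha$ is not principal in the last rule of one of the two derivations, the cut is permuted through that rule: the inner hypothesis is applied to the (strictly shorter) premise(s) on that side and the rule is re-applied, using that all rules are in context-sharing form and recombining contexts with $\weakr$ and $\cn$.

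The substantive case is $\alpha$ principal on both sides. For $\alpha$ built from a positive connective one uses the standard \textbf{G3} reductions: the cut is replaced by cuts on the immediate subformulas of $\alpha$, with, in the implication case, a preliminary cut on $\alpha$ itself of smaller cut height against the left premise of $\impllr$, each followed by a contraction. The new work is for $\alpha = \neg\beta$: the left derivation ends in one of $\nr$, $\nefr$, $\copc$ (the case of $\anr$ on the left is treated separately below), and the right derivation ends in whichever of $\nr$, $\nefr$, $\copc$ makes $\neg\beta$ principal in the antecedent, so that its succedent is some negation $\neg\eta$. The common mechanism is that $\nr$, $\nefr$, $\copc$ keep their principal antecedent negation in their premises: cutting the unchanged left derivation of $\Ga \Rightarrow \neg\beta$ against a premise of the right rule is again a cut on $\neg\beta$ but of strictly smaller cut height, so the inner hypothesis applies and replaces that occurrence of $\neg\beta$ by $\Ga$; one then cuts the results against the premises of the left rule --- these are cuts on $\beta$, hence of strictly smaller rank --- contracts the duplicated contexts, and re-applies the left rule to rebuild $\neg\eta$. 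When the left rule is $\nefr$ this degenerates: its premise already has the shape $\Ga_0, \neg\delta \Rightarrow \delta$, so one weakens it and re-applies $\nefr$ directly.

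The case that genuinely departs from this pattern is $\anr$ on the left, so that the calculus is the one for $\mpc$ and, since $\anr$ never makes an antecedent formula principal, the right derivation ends in $\copc$ with $\neg\beta$ principal. Here $\anr$ does not introduce $\neg\beta$ but only removes a copy of $\beta$ from the antecedent, so it behaves like a non-principal rule: permuting it past the cut --- a cut on $\neg\beta$ of smaller cut height between the $\anr$-premise $\Ga, \beta \Rightarrow \neg\beta$ and the full right derivation $\De, \neg\beta \Rightarrow \neg\eta$ --- gives $\Ga, \beta, \De \Rightarrow \neg\eta$; separately, cutting the original left derivation against the minor premise $\De, \neg\beta, \eta \Rightarrow \beta$ of the right $\copc$ gives $\Ga, \De, \eta \Rightarrow \beta$; a cut of these two on $\beta$ (strictly smaller rank) followed by a contraction yields $\Ga, \De, \eta \Rightarrow \neg\eta$, whence $\anr$ delivers $\Ga, \De \Rightarrow \neg\eta$, which is the desired $\Ga, \De \Rightarrow \varphi$. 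I expect this last manoeuvre, and the bookkeeping required to check in each pairing of negation rules that every auxiliary cut strictly decreases the measure while every contraction invoked is an instance of the already-established $\cn$, to be the main obstacle; the rest transcribes the treatment of intuitionistic \textbf{G3} in~\cite{troelstra2000basic}.
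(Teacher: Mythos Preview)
Your argument is correct and follows the paper's route exactly: the same lexicographic induction on $(w(\alpha),\,h_1+h_2)$, the same permutations when $\alpha$ is not principal, and the same reductions for the principal negation cases, including the $\anr$/$\copc$ interaction, which the paper handles by precisely the two inner-hypothesis cuts and the final $\anr$ you describe. One small slip to fix: in the mixed case left-$\nr$/right-$\nefr$ of the {\sf NeF} calculus, your recipe ``re-apply the left rule'' would ask for a second premise of shape $\Ga,\De,\delta\Rightarrow\eta$ that the single $\nefr$-premise cannot supply; the correct last step---and what the paper does---is to apply $\nefr$ (not $\nr$) to the derived $\Ga,\De\Rightarrow\delta$ with $\neg\delta\in\Ga$, which immediately yields $\Ga,\De\Rightarrow\neg\eta$.
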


\begin{proof}
The proof goes by induction (with respect to the lexicographic order) on the pair $(n,m)$, where $n=w(\alpha)$ and $m$ is the combined heights of derivations of the premises, i.e., if $\vdash_{m_1} \Ga \Rightarrow \alpha$ and $\vdash_{m_2} \De,\alpha \Rightarrow \varphi$, then $m=m_1+m_2$. We focus on the negative rules, and consider two cases: the case in which (only) the left premise is an axiom, and the case in which neither of the premises is an axiom and the formula $\alpha$ is principal in both of them. For the remainig cases, we refer to~\cite{troelstra2000basic} and~\cite{Colacito:Thesis:2016}.

Consider the case in which the left premise is an axiom. 

Suppose that the last step in the derivation of the right premise is $\nr$:
\[
\begin{array}{ccccccccc}
\infer[\nr]{\De, \neg \beta, \alpha \Rightarrow \neg\gamma}{\De, \neg \beta, \beta, \alpha \Rightarrow \gamma \qquad \De, \neg \beta, \gamma, \alpha \Rightarrow \beta} 
\end{array}
\]
By the induction hypothesis twice, we have derivations
\[
\begin{array}{ccccccccc}
\infer[]{\Ga, \De, \neg \beta, \gamma \Rightarrow \beta}{\Ga \Rightarrow \alpha \qquad \De, \neg \beta, \gamma, \alpha \Rightarrow \beta} 
\end{array}
\]
and
\[
\begin{array}{ccccccccc}
\infer[]{\Ga, \De, \neg \beta, \beta \Rightarrow \gamma}{\Ga \Rightarrow \alpha \qquad \De, \neg \beta, \beta, \alpha \Rightarrow \gamma} 
\end{array}
\]
By using $\nr$, we get a derivation of the sequent $\Ga, \De, \neg\beta \Rightarrow \neg\gamma$.

Suppose that the last step in the derivation of the right premise is $\nefr$:
\[
\begin{array}{ccccccccc}
\infer[\nefr]{\De, \neg \beta, \alpha \Rightarrow \neg\gamma}{\De, \neg \beta, \alpha \Rightarrow \beta} 
\end{array}
\]
By the induction hypothesis, we have a derivation
\[
\begin{array}{ccccccccc}
\infer[]{\Ga, \De, \neg \beta \Rightarrow \beta}{\Ga \Rightarrow \alpha \qquad \De, \neg \beta, \alpha \Rightarrow \beta} 
\end{array}
\]
By using $\nefr$, we get a derivation of the sequent $\Ga, \De, \neg\beta \Rightarrow \neg\gamma$.

Suppose that the derivation ends with an application of $\copc$:
\[
\begin{array}{ccccccccc}
\infer[\copc]{\De, \neg \beta, \alpha \Rightarrow \neg\gamma}{\De, \neg \beta, \gamma, \alpha \Rightarrow \beta} 
\end{array}
\]
By the induction hypothesis, we have a derivation
\[
\begin{array}{ccccccccc}
\infer[]{\Ga, \De, \neg \beta, \gamma \Rightarrow \beta}{\Ga \Rightarrow \alpha \qquad \De, \neg \beta, \gamma, \alpha \Rightarrow \beta} 
\end{array}
\]
By using $\copc$, we get a derivation of the sequent $\Ga, \De, \neg\beta \Rightarrow \neg\gamma$.

Suppose that the last step in the derivation of the right premise is $\anr$:
\[
\begin{array}{ccccccccc}
\infer[\anr]{\De, \alpha \Rightarrow \neg \beta}{\De, \beta, \alpha \Rightarrow \neg\beta} 
\end{array}
\]
By the induction hypothesis, we have a derivation
\[
\begin{array}{ccccccccc}
\infer[]{\Ga, \De, \beta \Rightarrow \neg\beta}{\Ga \Rightarrow \alpha \qquad \De, \beta, \alpha \Rightarrow \neg\beta} 
\end{array}
\]
By using $\anr$, we get a derivation of the sequent $\Ga, \De \Rightarrow \neg\beta$.

Consider the case in which neither of the premises is an axiom, and the formula $\alpha$ is principal in both of them. The left sequent comes as the conclusion of a negation rule if and only if the right premise sequent does.

Suppose that the last step in the derivation of both premises is $\nr$:
\[
\begin{array}{ccccccccc}
\infer[\nr]{\Ga, \neg \beta \Rightarrow \neg\alpha_1}{\Ga, \neg \beta, \beta \Rightarrow \alpha_1 \qquad \Ga, \neg \beta, \alpha_1 \Rightarrow \beta} 
\end{array}
\]
\[
\begin{array}{ccccccccc}
\infer[\nr]{\De, \neg \alpha_1 \Rightarrow \neg\gamma}{\De, \neg \alpha_1, \alpha_1 \Rightarrow \gamma \qquad \De, \neg \alpha_1, \gamma \Rightarrow \alpha_1} 
\end{array}
\]
By the induction hypothesis twice, we have derivations
\[
\begin{array}{ccccccccc}
\infer[]{\Ga, \De, \neg\beta, \alpha_1 \Rightarrow \gamma}{\Ga, \neg \beta \Rightarrow \neg\alpha_1 \qquad \De, \neg \alpha_1, \alpha_1 \Rightarrow \gamma} 
\end{array}
\]
and
\[
\begin{array}{ccccccccc}
\infer[]{\Ga, \De, \neg\beta, \gamma \Rightarrow \alpha_1}{\Ga, \neg \beta \Rightarrow \neg\alpha_1 \qquad \De, \neg \alpha_1, \gamma \Rightarrow \alpha_1} 
\end{array}
\]
Again, by the induction hypothesis twice, and the contraction rule, we get
\[
\begin{array}{ccccccccc}
\infer[]{\Ga, \De, \neg\beta, \beta \Rightarrow \gamma}{\Ga, \neg \beta, \beta \Rightarrow \alpha_1 \qquad \Ga, \De, \neg\beta, \alpha_1 \Rightarrow \gamma} 
\end{array}
\]
and
\[
\begin{array}{ccccccccc}
\infer[]{\Ga, \De, \neg\beta, \gamma \Rightarrow \beta}{\Ga, \De, \neg\beta, \gamma \Rightarrow \alpha_1 \qquad \Ga, \neg \beta, \alpha_1 \Rightarrow \beta} 
\end{array}
\]
Finally, by using $\nr$, we get a derivation of the sequent $\Ga, \De, \neg\beta \Rightarrow \neg\gamma$.

Suppose that the last step in the derivation of the left premise is $\nr$, and the right premise comes from an application of $\nefr$:
\[
\begin{array}{ccccccccc}
\infer[\nr]{\Ga, \neg \beta \Rightarrow \neg\alpha_1}{\Ga, \neg \beta, \beta \Rightarrow \alpha_1 \qquad \Ga, \neg \beta, \alpha_1 \Rightarrow \beta} 
\end{array}
\]
\[
\begin{array}{ccccccccc}
\infer[\nefr]{\De, \neg \alpha_1 \Rightarrow \neg\gamma}{\De, \neg \alpha_1 \Rightarrow \alpha_1} 
\end{array}
\]
By the induction hypothesis, we have a derivation
\[
\begin{array}{ccccccccc}
\infer[]{\Ga, \De, \neg\beta \Rightarrow \alpha_1}{\Ga, \neg \beta \Rightarrow \neg\alpha_1 \qquad \De, \neg \alpha_1 \Rightarrow \alpha_1} 
\end{array}
\]
But then, by the induction hypothesis, and the contraction rule, we get
\[
\begin{array}{ccccccccc}
\infer[]{\Ga, \De, \neg\beta \Rightarrow \beta}{\Ga, \De, \neg\beta \Rightarrow \alpha_1 \qquad \Ga, \neg \beta, \alpha_1 \Rightarrow \beta} 
\end{array}
\]
Finally, by using $\nefr$, we get a derivation of the sequent $\Ga, \De, \neg\beta \Rightarrow \neg\gamma$.

Suppose that the last step in the derivation of both premises is $\copc$:
\[
\begin{array}{ccccccccc}
\infer[\copc]{\Ga, \neg \beta \Rightarrow \neg\alpha_1}{\Ga, \neg \beta, \alpha_1 \Rightarrow \beta} 
\end{array}
\]
\[
\begin{array}{ccccccccc}
\infer[\copc]{\De, \neg \alpha_1 \Rightarrow \neg\gamma}{\De, \neg \alpha_1, \gamma \Rightarrow \alpha_1} 
\end{array}
\]
By the induction hypothesis, we have a derivation
\[
\begin{array}{ccccccccc}
\infer[]{\Ga, \De, \neg\beta, \gamma \Rightarrow \alpha_1}{\Ga, \neg \beta \Rightarrow \neg\alpha_1 \qquad \De, \neg \alpha_1, \gamma \Rightarrow \alpha_1} 
\end{array}
\]
By the induction hypothesis again, and the contraction rule, we also get
\[
\begin{array}{ccccccccc}
\infer[]{\Ga, \De, \neg\beta, \gamma \Rightarrow \beta}{\Ga, \De, \neg\beta, \gamma \Rightarrow \alpha_1 \qquad \Ga, \neg \beta, \alpha_1 \Rightarrow \beta} 
\end{array}
\]
Hence, by using $\copc$, we get a derivation of the sequent $\Ga, \De, \neg\beta \Rightarrow \neg\gamma$. 

Suppose the derivation of the left premise ends with an application of $\anr$, and the right premise comes from an application of $\copc$:
\[
\begin{array}{ccccccccc}
\infer[\anr]{\Ga \Rightarrow \neg\alpha_1}{\Ga, \alpha_1 \Rightarrow \neg\alpha_1} 
\end{array}
\]
\[
\begin{array}{ccccccccc}
\infer[\copc]{\De, \neg \alpha_1 \Rightarrow \neg\gamma}{\De, \neg \alpha_1, \gamma \Rightarrow \alpha_1} 
\end{array}
\]
By the induction hypothesis twice, we have derivations
\[
\begin{array}{ccccccccc}
\infer[]{\Ga, \De, \gamma \Rightarrow \alpha_1}{\Ga \Rightarrow \neg\alpha_1 \qquad \De, \neg \alpha_1, \gamma \Rightarrow \alpha_1} 
\end{array}
\]
and
\[
\begin{array}{ccccccccc}
\infer[]{\Ga, \De, \alpha_1 \Rightarrow \neg\gamma}{\Ga, \alpha_1 \Rightarrow \neg\alpha_1 \qquad \De, \neg \alpha_1 \Rightarrow \neg\gamma} 
\end{array}
\]
By the induction hypothesis again, and the weakening rule, we also get
\[
\begin{array}{ccccccccc}
\infer[]{\Ga, \De, \gamma \Rightarrow \neg\gamma}{\Ga, \De, \gamma \Rightarrow \alpha_1 \qquad \Ga, \De, \alpha_1 \Rightarrow \neg\gamma} 
\end{array}
\]
But then, by using $\anr$, we get a derivation of $\Ga, \De\Rightarrow \neg\gamma$. 

The remaining cases go similarly, and are left to the reader---who can also find them in~\cite{Colacito:Thesis:2016}.
\end{proof}


It is immediate to see that the calculi can be used to give an effective proof that, if the sequent $\Ga \seq \varphi \lor \psi$ can be proved via a derivation of height $n \in \mathbb{N}$, and $\Ga$ does not contain any disjunction, then (at least) one of the two sequents $\Ga \seq \varphi$ and $\Ga \seq \psi$ can be proved via a derivation of height at most $n$. The most striking consequence of cut elimination is the possibility of computable proof search, and hence, of providing a proof-theoretic decision method for the considered logics. As for complexity results, we cannot say much with the tools developed in the first part of the paper. In fact, proof search in the calculi presented in Figure~\ref{fig:g} need not terminate, since we might encounter loops resulting in infinite branches. 

\begin{example}\label{ex:nonterm}
Consider a proof-search procedure for the sequent $\neg\neg\neg p\Rightarrow\neg\neg p$ in the calculus for {\sf CoPC}. The only possibility is to consider a backward application of $\copc$, obtaining the sequent $\neg\neg\neg p,\neg p\Rightarrow\neg\neg p$. At this point, considering a further backward application of $\copc$, with context $\neg p$, we get $\neg\neg\neg p,\neg p,\neg p\Rightarrow\neg\neg p$, and we can already see that the bottom-up proof-search procedure does not necessarily terminate.
\end{example}

Observe that sequents of the form $\Ga, \varphi \Rightarrow \varphi$ can be proved to be derivable for every formula $\varphi$. By this and a straightforward induction on the height of a derivation, we can also prove that we can uniformly substitute formulas in derivable sequents obtaining derivable sequents. The following result reduces the number of negated formulas to be considered when dealing with any system containing the $\copc$ rule. 

\begin{prop}\label{p:negneg}
Let $n \in \mathbb{N}$ be an arbitrary natural number such that $n \ge 1$. Then, we have that $\neg^{(2n+1)} p \to \neg p \text{ and } \neg^{(2n)} p \leftrightarrow \neg\neg p$ are derivable via the rule \copc,
where $\neg^{(m)}$ denotes $m$ nested applications of the negation operator for any natural number $m$.
\end{prop}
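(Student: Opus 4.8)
The plan is to reduce everything to two explicit base derivations and then bootstrap by uniform substitution and cut, using only results already available. Concretely, I would first establish, by exhibiting a derivation, the two facts: \textbf{(A)} the sequent $\neg\neg\neg p \Rightarrow \neg p$ is derivable using only \copc (together with \axr\ and admissible contraction); and \textbf{(B)} the sequent $\neg\neg p \Rightarrow \neg\neg\neg\neg p$ is derivable using only \copc. Since (by the observation made just before the statement) uniform substitution preserves derivability, (A) and (B) immediately yield $\neg\neg\neg\varphi \Rightarrow \neg\varphi$ and $\neg\neg\varphi \Rightarrow \neg\neg\neg\neg\varphi$ for every formula $\varphi$.

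For (A) I would apply \copc\ backwards three times: from $\neg\neg\neg p \Rightarrow \neg p$ (empty context, principal $\neg\neg\neg p$) one is left with $\neg\neg\neg p, p \Rightarrow \neg\neg p$; a second application with principal $\neg\neg\neg p$ gives $p,\neg\neg\neg p,\neg p \Rightarrow \neg\neg p$; a third one with principal $\neg p$ gives $p,\neg\neg\neg p,\neg p,\neg p \Rightarrow p$, which the height-preserving admissible contraction $\cn$ reduces to the initial sequent $\neg\neg\neg p,\neg p, p \Rightarrow p$. For (B), likewise three backward \copc\ steps take $\neg\neg p \Rightarrow \neg\neg\neg\neg p$ first to $\neg\neg p,\neg\neg\neg p \Rightarrow \neg p$ (principal $\neg\neg p$), then to $\neg\neg\neg p,\neg\neg p, p \Rightarrow \neg p$ (principal $\neg\neg p$), then to $\neg\neg p, p,\neg\neg\neg p, p \Rightarrow \neg\neg p$ (principal $\neg\neg\neg p$), and contraction together with the fact that $\Ga,\varphi \Rightarrow \varphi$ is derivable for every $\varphi$ closes the branch.

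With (A) and (B) in hand I would prove, by induction on $n \ge 1$, that the three sequents \textbf{(i)} $\neg^{(2n+1)} p \Rightarrow \neg p$, \textbf{(ii)} $\neg^{(2n)} p \Rightarrow \neg\neg p$, and \textbf{(iii)} $\neg\neg p \Rightarrow \neg^{(2n)} p$ are derivable. The base case $n=1$ is (A) for (i), and the trivially derivable $\neg\neg p \Rightarrow \neg\neg p$ for (ii) and (iii). For the inductive step: instantiating (A) at $\varphi := \neg^{(2n)} p$ gives $\neg^{(2n+3)} p \Rightarrow \neg^{(2n+1)} p$, and \cutr\ against the hypothesis for (i) yields $\neg^{(2n+3)} p \Rightarrow \neg p$; instantiating (A) at $\varphi := \neg^{(2n-1)} p$ and cutting against (ii) yields $\neg^{(2n+2)} p \Rightarrow \neg\neg p$; and instantiating (B) at $\varphi := \neg^{(2n-2)} p$ gives $\neg^{(2n)} p \Rightarrow \neg^{(2n+2)} p$, which cut against (iii) turns into $\neg\neg p \Rightarrow \neg^{(2n+2)} p$. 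Since cut is admissible, these steps are legitimate. Finally, \implrr\ applied to (i) gives the derivability of $\neg^{(2n+1)} p \to \neg p$, and \implrr\ applied to (ii) and (iii) followed by \conjrr\ gives the derivability of $(\neg^{(2n)} p \to \neg\neg p)\land(\neg\neg p \to \neg^{(2n)} p)$, i.e.\ of $\neg^{(2n)} p \leftrightarrow \neg\neg p$.

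The only genuinely non-mechanical part is finding the two base derivations (A) and (B): at each backward \copc\ step one must choose which negated antecedent formula to make principal so that, after at most one contraction, the branch terminates at an initial sequent; once this choreography is fixed, everything else (substitution, cut, \implrr, \conjrr, and the admissibility of contraction and cut) is routine and already established earlier in the paper. One could alternatively argue purely equationally via Theorem~\ref{t:completeness}, but the explicit \copc-derivations are the informative content here, being precisely what the loop-checking mechanism of Section~\ref{s:4} exploits.
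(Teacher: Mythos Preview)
Your proof is correct and very close in spirit to the paper's, but there is one organizational difference worth pointing out. You establish two base derivations, (A) $\neg\neg\neg p \Rightarrow \neg p$ and (B) $\neg\neg p \Rightarrow \neg\neg\neg\neg p$, and then run three parallel inductions via substitution and \cutr. The paper uses only (A): in the inductive step it obtains $\neg\neg p \Rightarrow \neg^{(2n)} p$ directly from the hypothesis $\neg^{(2n-1)} p \Rightarrow \neg p$ by a single weakening-plus-\copc\ move (weaken to $\neg\neg p,\neg^{(2n-1)} p \Rightarrow \neg p$ and apply \copc\ with $\neg\neg p$ principal), rather than via your auxiliary (B); a second such move and a cut with (A) then give $\neg^{(2n+1)} p \Rightarrow \neg p$. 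For $\neg^{(2n)} p \to \neg\neg p$ the paper simply substitutes $\neg q$ for $p$ in the already-established $\neg^{(2n-1)} p \to \neg p$. So the paper exploits the antitonicity encoded in \copc\ on the fly inside the induction, whereas you front-load it into the explicit base case (B). Both routes are equally valid; the paper's is a touch more economical, while yours is a bit more uniform (all induction steps are pure substitution-plus-cut). Your choreography for (A) is exactly the paper's, and your (B) derivation checks out as well.
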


\begin{proof}
The proof goes by induction on $n \in \mathbb{N}$. The base case consists of presenting a derivation of the sequent $\Rightarrow \neg\neg\neg p \to \neg p$. After an application of $\implrr$, it is sufficient to apply to the obtained sequent $\neg\neg\neg p \Rightarrow \neg p$ the rule $\copc$ three times, with principal formulas respectively the pairs $(\neg\neg\neg p, \neg p)$, $(\neg\neg\neg p, \neg\neg p)$, and $(\neg p, \neg\neg p)$. This leads to the axiom:
\[
 \begin{array}{ccccccccc}
 \infer[\axr]{\neg\neg\neg p, \neg p, \neg p, p \Rightarrow p}{} & 
 \end{array}
\]

As for the induction step for $n > 1$, by the induction hypothesis and the invertibility of $\implrr$, we have derivations of the sequents $\neg^{(2n-1)} p \Rightarrow \neg p$ and $\neg\neg\neg p \Rightarrow \neg p$. From $\neg^{(2n-1)} p \Rightarrow \neg p$, by using $\weakr$ and $\copc$,

\[
\begin{array}{ccccccccc}
\infer[]{\neg\neg p \Rightarrow \neg^{(2n)} p}{\infer[]{\neg\neg p, \neg^{(2n-1)} p \Rightarrow \neg p}{\neg^{(2n-1)} p \Rightarrow \neg p}} 
\end{array}
\]
\noindent and hence, we get $\neg\neg p \to \neg^{(2n)} p$.
Further, by $\weakr$ and $\copc$, 

\[
\begin{array}{ccccccccc}
\infer[]{\neg^{(2n+1)} p \Rightarrow \neg\neg\neg p}{\infer[]{\neg^{(2n+1)} p, \neg\neg p \Rightarrow \neg^{(2n)} p}{\neg\neg p \Rightarrow \neg^{(2n)} p}} 
\end{array}
\]

\noindent 
But then, by $\cutr$, 

\[
\begin{array}{ccccccccc}
\infer[]{\neg^{(2n+1)} p \Rightarrow \neg p}{\neg^{(2n+1)} p \Rightarrow \neg\neg\neg p \qquad \neg\neg\neg p \Rightarrow \neg p} 
\end{array}
\]
\noindent to get $\neg^{(2n+1)} p \to \neg p$.
Finally, by the induction hypothesis, the formula $\neg^{(2n-1)} p \to \neg p$ is a theorem. But then, by uniformly substituting $\neg q$ for $p$ in it, we obtain the desired $\neg^{(2n)} q \to \neg\neg q$. 
\end{proof}

It is worth mentioning a further application of the calculi, whose details can be found in~\cite{Colacito:Thesis:2016,colacito2016subminimal}. If $\varphi$ is a formula in the language $\mathcal{L}({\sf Prop})$, the inductively defined $\sim$ operator that leaves the atoms unchanged, maps $\varphi \circ \psi$ to $(\varphi^{\sim} \circ \psi^{\sim})$ for $\circ \in \{\land, \lor, \to\}$, and $\neg \varphi$ to $(\varphi^{\sim} \to \neg \varphi^{\sim})$, soundly and truthfully translates {\sf MPC} into the logic {\sf CoPC} (and, even more, into the logic {\sf N}). 

We conclude this section with a remark. It can be seen by looking at the rules of the proof systems presented in Figure~\ref{fig:g} that none of the theorems of the logics {\sf N}, {\sf NeF}, and {\sf CoPC} is of the form $\neg \varphi$ (or, similarly, $\neg\varphi \lor \neg\psi$). In fact, there is no way to start the proof search for a sequent of the form $\seq \neg \varphi$, since the only rules for negation that can be applied require the left-hand side of the sequent to contain a negated formula.

\subsection{Craig's Interpolation}

Craig's interpolation theorem states that for each provable implication $\phi\to\psi$, there exists an interpolant $\sigma$ in the common language of $\psi$ and $\phi$, with both $\phi\to\sigma$ and $\sigma\to\psi$ being provable. Syntactic proofs of this result for intuitionistic logic go back to Sch\"{u}tte~\cite{SCH62}, and Maehara~\cite{M60} (cf.~\cite{troelstra2000basic}), and use the idea of splitting sequents in Gentzen's calculus. We adopt the same method here, and prove the result by proof-theoretical means for the logics we consider in this paper, using the analytical calculi we have at hand. 

By common language of two multisets of formulas $\Ga$ and $\De$ we mean the set of propositional variables $\{p_1, \ldots, p_n\}$ that appear both in (at least) a formula from $\Ga$ and in (at least) a formula from $\De$. It is not necessary to require the common language to be non-empty. In fact, if this is the case, the interpolant is given by the constant $\top$ (or by a formula obtained from $\top$ via negations).

\begin{theorem}
Let $\Ga,\De$ be finite multisets of formulas and let $\varphi$ be a formula such that the common language of $\Ga$ and $\De,\varphi$ is not empty. If $\vdash \Ga,\De \Rightarrow \varphi$, there exists a formula $\sigma$ such that:

\begin{enumerate}

\item [\rm 1.] the language of $\sigma$ is contained in the common language of $\Ga$ and $\De,\varphi$,
\item [\rm 2.] $\vdash \Ga \Rightarrow \sigma$ and $\vdash \De, \sigma\Rightarrow \varphi$.

\end{enumerate}

\end{theorem}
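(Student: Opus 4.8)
The plan is to follow the standard Maehara-style argument, proving a stronger statement by induction on the height of a cut-free derivation of $\Ga, \De \Rightarrow \varphi$. The stronger statement considers an arbitrary \emph{partition} of the antecedent of a derivable sequent into two multisets: if $\vdash \Ga_1, \Ga_2 \Rightarrow \varphi$, then there is a formula $\sigma$ whose propositional variables occur both among those of $\Ga_1$ and among those of $\Ga_2, \varphi$, such that $\vdash \Ga_1 \Rightarrow \sigma$ and $\vdash \Ga_2, \sigma \Rightarrow \varphi$. (One drops the non-emptiness hypothesis on the common language at this stage: when the common language is empty one simply uses $\top$, or a negation-iterate of $\top$, as noted just before the theorem; this is exactly why the constant $\top$ is kept in the language.) Since the \textbf{G3}-systems are cut-free complete by the previous section, it suffices to run the induction over cut-free derivations, and the conclusion of the theorem follows by taking $\Ga_1 = \Ga$ and $\Ga_2 = \De$.

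First I would dispatch the base cases. For $\axr$ with conclusion $\Ga_1, \Ga_2, p \Rightarrow p$: if $p$ is split into $\Ga_1$ take $\sigma = p$, giving $\Ga_1 \Rightarrow p$ by $\axr$ and $\Ga_2, p \Rightarrow p$ by $\axr$; symmetrically if $p$ is in $\Ga_2$ one checks whether $p$ also occurs in $\Ga_1$ (then $\sigma = p$ again) or not (then $\sigma = \top$, using $\topr$ and weakening). For $\topr$, with conclusion $\Ga_1, \Ga_2 \Rightarrow \top$, take $\sigma = \top$. Then I would treat the inductive step rule by rule. For the positive connective rules this is the classical Maehara bookkeeping — the principal formula sits on one side of the partition, one applies the induction hypothesis to the premise(s) with the induced partition, and combines the resulting interpolants by $\lor$ (for left rules with two premises such as $\disjlr$, $\impllr$) or $\land$ (for right rules with two premises such as $\conjrr$), or simply propagates the interpolant through (for the one-premise rules), with the invertibility and weakening/contraction lemmas from Section~\ref{s:2} used freely; the variable-occurrence conditions are routine to verify. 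These cases are handled exactly as in~\cite{troelstra2000basic}, so I would state them briefly and concentrate on the negation rules.

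The genuinely new work is the four negation rules $\nr$, $\nefr$, $\copc$, $\anr$. Take $\copc$ as the representative case: the conclusion is $\Ga_1, \Ga_2, \neg\alpha \Rightarrow \neg\beta$ with premise $\Ga_1, \Ga_2, \neg\alpha, \beta \Rightarrow \alpha$. Suppose $\neg\alpha \in \Ga_1$ (the other placement is symmetric). The delicate point is that $\beta$ is a \emph{new} formula appearing in the premise that is not present in the conclusion, and the antecedent formula $\neg\alpha$ and the succedent formula $\neg\beta$ both mention subformulas ($\alpha$, $\beta$) that may straddle the language boundary. I would apply the induction hypothesis to the premise with the partition $(\Ga_1, \neg\alpha)$ and $(\Ga_2, \beta)$, obtaining $\sigma$ with $\vdash \Ga_1, \neg\alpha \Rightarrow \sigma$ and $\vdash \Ga_2, \beta, \sigma \Rightarrow \alpha$, where the variables of $\sigma$ lie in the common language of $\Ga_1, \neg\alpha$ and $\Ga_2, \beta, \alpha$. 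The natural move is to take the final interpolant to be $\neg\sigma$ (or a suitable combination), using $\copc$ and $\anr$-free manipulations to rebuild $\vdash \Ga_1, \neg\alpha \Rightarrow \neg\sigma$ and $\vdash \Ga_2, \neg\sigma \Rightarrow \neg\beta$ — note that the latter reintroduces exactly a $\copc$-style step, which is available in the systems under consideration. The main obstacle, and the part I expect to require the most care, is precisely controlling the variable-occurrence condition across these negation steps: $\beta$ must not leak into the interpolant's language, and when $\neg\alpha$ and $\neg\beta$ land on opposite sides of the partition the choice of $\sigma$ must be adjusted (using $\top$, $\neg\top$, or iterated negations thereof as in Proposition~\ref{p:negneg}) to stay within the genuinely common variables. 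I would organize the negation cases by which side the principal negated formula $\neg\alpha$ falls on, handle $\nefr$ analogously to $\copc$, handle $\nr$ by combining the two premise-interpolants with $\land$ or $\lor$ and wrapping with a negation, and handle $\anr$ (whose conclusion $\Ga \Rightarrow \neg\alpha$ has no principal antecedent formula) by applying the induction hypothesis to the premise $\Ga, \alpha \Rightarrow \neg\alpha$ with $\alpha$ placed on whichever side keeps the variable condition, then closing with $\anr$. Since each negation rule is available in the specific calculus for which we are proving interpolation, the rebuilding steps stay inside the system, and the induction goes through uniformly for all four logics {\sf N}, {\sf NeF}, {\sf CoPC}, and {\sf MPC}.
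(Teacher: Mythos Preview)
Your Maehara framework and the treatment of the positive rules and of $\anr$ match the paper. There are, however, two concrete gaps in the negation cases.

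First, in the $\copc$ case with $\neg\alpha$ on the $\Ga_1$-side, your choice of partition for the premise is the wrong way round. From $\vdash \Ga_1,\neg\alpha \Rightarrow \sigma$ and $\vdash \Ga_2,\beta,\sigma \Rightarrow \alpha$ you cannot rebuild $\vdash \Ga_1,\neg\alpha \Rightarrow \neg\sigma$: no rule of the calculus turns ``$\Rightarrow\sigma$'' into ``$\Rightarrow\neg\sigma$''. The point is that the succedent $\alpha$ of the premise lives in the language of the \emph{left} block, so one must apply the induction hypothesis with the roles \emph{swapped}: take $(\Ga_2,\beta)$ as the ``$\Ga$''-part and $(\Ga_1,\neg\alpha)$ as the ``$\De$''-part, obtaining $\vdash \Ga_2,\beta\Rightarrow\sigma_1$ and $\vdash \Ga_1,\neg\alpha,\sigma_1\Rightarrow\alpha$. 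Then $\neg\sigma_1$ is indeed the interpolant, via one application of $\copc$ on each side. The same swap is needed in the $\nefr$ case, and there the interpolant is $\sigma_1\to\neg\sigma_1$ rather than $\neg\sigma_1$: without $\anr$ in the {\sf NeF} system you cannot discharge the antecedent $\sigma_1$ after deriving $\Ga_1,\neg\alpha,\sigma_1\Rightarrow\neg\sigma_1$, so the implication is essential.

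Second, and more seriously, your plan for $\nr$ --- ``combine the two premise-interpolants with $\land$ or $\lor$ and wrap with a negation'' --- does not go through in the basic system {\sf N}. There the only negation rule is $\nr$ itself; there is no contraposition, no $\nefr$, no $\anr$, so none of the expected simplifications are available. The paper's interpolant for $\nr$ with $\neg\alpha$ on the left is the genuinely non-obvious formula
\[
(\sigma_1\to\sigma_2)\to\bigl((\sigma_2\to\sigma_1)\land\neg\sigma_1\bigr),
\]
where $\sigma_1,\sigma_2$ come from the two premises under two \emph{different} partitions (one of them swapped as above). Verifying that this formula does the job requires several steps using $\impllr$, $\conjrr$, $\conjlr$, cut, and a single application of $\nr$ on each side; a simple Boolean combination under a negation will not suffice. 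This is the substantive new content of the proof, and your outline does not anticipate it.
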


\begin{proof}
The proof goes by induction on the height of a derivation $\vdash \Ga,\De \Rightarrow \varphi$, where the induction hypothesis states that an interpolant $\sigma$ exists for every possible splitting of the considered sequents. Given a sequent $\Ga, \De \Rightarrow \varphi$, we use $\Ga'$ to denote a sub-multiset of $\Ga$ and $\De'$ for a sub-multiset of $\De$. We focus on the negative rules, and for the positive ones we refer to~\cite{troelstra2000basic}.

Suppose that the derivation ends with an application of $\nr$ of the form 
\[
\begin{array}{ccccccccc}
\infer[\nr]{\Ga', \neg\alpha, \De\Rightarrow \neg\beta}{\Ga', \neg\alpha, \alpha, \De \Rightarrow \beta \qquad \Ga', \neg\alpha, \De, \beta \Rightarrow \alpha} 
\end{array}
\] 
By the induction hypothesis twice, there exist $\sigma_1$ and $\sigma_2$ in the common language of $\Ga', \neg\alpha$ and $\De, \beta$ such that the sequents 
\[
\Ga', \neg\alpha, \alpha \Rightarrow \sigma_1;\,\,
\De, \sigma_1 \Rightarrow \beta;\,\,
\Ga', \neg\alpha, \sigma_2 \Rightarrow \alpha;
\text{ and }
\De, \beta \Rightarrow \sigma_2
\]
are derivable. We are going to prove that the desired interpolant is of the form: 
\[
(\sigma_1 \to \sigma_2) \to ((\sigma_2 \to \sigma_1) \land \neg\sigma_1).
\]
First of all, observe that the considered formula is in the common language of $\Ga', \neg\alpha$ and $\De, \beta$. It remains to find derivations of the sequents 
\[
\Ga', \neg\alpha \Rightarrow (\sigma_1 \to \sigma_2) \to ((\sigma_2 \to \sigma_1) \land \neg\sigma_1)
\text{ and }
\]
\[
\De, (\sigma_1 \to \sigma_2) \to ((\sigma_2 \to \sigma_1) \land \neg\sigma_1) \Rightarrow \neg\beta.
\]
By using $\cutr$ and $\cn$, 
\[
\begin{array}{ccccccccc}
\infer[]{\Ga', \neg\alpha, \sigma_2 \Rightarrow \sigma_1}{\Ga', \neg\alpha, \sigma_2 \Rightarrow \alpha \qquad \Ga', \neg\alpha, \alpha \Rightarrow \sigma_1}
\end{array}
\]
But then, by $\implrr$, we get $\Ga', \neg\alpha \Rightarrow \sigma_2 \to \sigma_1$, and we weaken it to obtain a derivation of $\Ga', \neg\alpha, \sigma_1 \to \sigma_2 \Rightarrow \sigma_2 \to \sigma_1\,\,(*)$. Now, by using $\weakr$, $\impllr$ and the derivability of the sequent $\Ga', \neg\alpha, \sigma_1 \to \sigma_2, \sigma_1 \Rightarrow \sigma_1$, we obtain
\[
\begin{array}{ccccccccc}
\infer[\impllr]{\Ga', \neg\alpha, \sigma_1, \sigma_1 \to \sigma_2 \Rightarrow \alpha}{\Ga', \neg\alpha, \sigma_1 \to \sigma_2, \sigma_1 \Rightarrow \sigma_1 \qquad \Ga', \neg\alpha, \sigma_2, \sigma_1 \Rightarrow \alpha} 
\end{array}
\]
But then, by $\weakr$ and $\nr$,
\[
\begin{array}{ccccccccc}
\infer[\nr]{\Ga', \neg\alpha, \sigma_1 \to \sigma_2 \Rightarrow \neg\sigma_1}{\Ga', \neg\alpha, \alpha, \sigma_1 \to \sigma_2 \Rightarrow \sigma_1 & & \Ga', \neg\alpha, \sigma_1, \sigma_1 \to \sigma_2 \Rightarrow \alpha} 
\end{array}
\] 
From the latter and from $(*)$, by $\conjrr$ and $\implrr$,
\[
\begin{array}{ccccccccc}
\infer[\implrr]{\Ga', \neg\alpha \Rightarrow (\sigma_1 \to \sigma_2) \to ((\sigma_2 \to \sigma_1) \land \neg\sigma_1)}{\infer[\conjrr]{\Ga', \neg\alpha, \sigma_1 \to \sigma_2 \Rightarrow ((\sigma_2 \to \sigma_1) \land \neg\sigma_1)}{\Ga', \neg\alpha, \sigma_1 \to \sigma_2 \Rightarrow \sigma_2 \to \sigma_1 \qquad \Ga', \neg\alpha, \sigma_1 \to \sigma_2 \Rightarrow \neg\sigma_1}} 
\end{array}
\] 
At this point we start again and, by $\cutr$, $\cn$ and $\implrr$, we get
\[
\begin{array}{ccccccccc}
\infer[\implrr]{\De \Rightarrow \sigma_1 \to \sigma_2}{\infer[\cutr]{\De, \sigma_1 \Rightarrow \sigma_2}{\De, \sigma_1 \Rightarrow \beta \qquad \De, \beta \Rightarrow \sigma_2}}
\end{array}
\]
Now, by $\weakr$, we obtain a derivation of 
\[
\De, (\sigma_1 \to \sigma_2) \to ((\sigma_2 \to \sigma_1) \land \neg\sigma_1) \Rightarrow \sigma_1 \to \sigma_2.
\]
We shall use the obtained sequent as the left premise of an application of $\impllr$, and hence, we look for a derivation of $\De, ((\sigma_2 \to \sigma_1) \land \neg\sigma_1) \Rightarrow \neg\beta$. By the induction hypothesis, we get the following derivations: 
\[
\begin{array}{ccccccccc}
\infer[\impllr]{\De, \sigma_2 \to \sigma_1, \neg\sigma_1, \beta \Rightarrow \sigma_1}{\De, \sigma_2 \to \sigma_1, \neg\sigma_1, \beta \Rightarrow \sigma_2 & & \De, \neg\sigma_1, \beta, \sigma_1 \Rightarrow \sigma_1}
\end{array}
\]
\[
\begin{array}{ccccccccc}
\infer[\conjlr]{\De, ((\sigma_2 \to \sigma_1) \land \neg\sigma_1) \Rightarrow \neg\beta}{\infer[\nr]{\De, \sigma_2 \to \sigma_1, \neg\sigma_1 \Rightarrow \neg\beta}{\De, \sigma_2 \to \sigma_1, \neg\sigma_1, \sigma_1 \Rightarrow \beta & & \De, \sigma_2 \to \sigma_1, \neg\sigma_1, \beta \Rightarrow \sigma_1}}
\end{array}
\]

Suppose now that the derivation ends with an application of $\nr$ of the form 
\[
\begin{array}{ccccccccc}
\infer[\nr]{\Ga, \De', \neg\alpha \Rightarrow \neg\beta}{\Ga, \De', \neg\alpha, \alpha \Rightarrow \beta \qquad \Ga, \De', \neg\alpha, \beta \Rightarrow \alpha} 
\end{array}
\] 
By the induction hypothesis twice, there exist $\sigma_1$ and $\sigma_2$ in the common language of $\Ga$ and $\De', \neg\alpha, \beta$ such that the sequents 
\[
\Ga \Rightarrow \sigma_1;\,\,
\De', \neg\alpha, \alpha, \sigma_1 \Rightarrow \beta;\,\,
\Ga \Rightarrow \sigma_2;
\text{ and }
\De', \neg\alpha, \beta, \sigma_2 \Rightarrow \alpha
\]
are derivable. By $\conjrr$, 
\[
\begin{array}{ccccccccc}
\infer[\conjrr]{\Ga \Rightarrow \sigma_1 \land \sigma_2}{\Ga \Rightarrow \sigma_1 \qquad \Ga \Rightarrow \sigma_2} 
\end{array}
\] 
Now, by $\weakr$, $\nr$ and $\conjlr$,
\[
\begin{array}{ccccccccc}
\infer[\conjlr]{\De', \neg\alpha, \sigma_1 \land \sigma_2 \Rightarrow \neg\beta}{\infer[\nr]{\De', \neg\alpha, \sigma_1, \sigma_2 \Rightarrow \neg\beta}{\De', \neg\alpha, \alpha, \sigma_1, \sigma_2 \Rightarrow \beta \qquad \De', \neg\alpha, \beta, \sigma_1, \sigma_2 \Rightarrow \alpha}} 
\end{array}
\] 
But now, given that $\sigma_1 \land \sigma_2$ is in the common language of $\Ga$ and $\De', \neg\alpha, \beta$, it is the desired interpolant.

Now, suppose that the derivation ends with an application of $\nefr$ of the form 
\[
\begin{array}{ccccccccc}
\infer[\nefr]{\Ga', \neg\alpha, \De \Rightarrow \neg\beta}{\Ga', \neg\alpha, \De \Rightarrow \alpha} 
\end{array}
\] 
By the induction hypothesis, there exists a formula $\sigma_1$ in the common language of $\Ga', \neg\alpha$ and $\De$ such that the sequents 
\[
\Ga', \neg\alpha, \sigma_1 \Rightarrow \alpha\text{ and }
\De \Rightarrow \sigma_1
\]
are derivable. By $\weakr$, $\nefr$ and $\impllr$, 
\[
\begin{array}{ccccccccc}
\infer[\impllr]{\De,\sigma_1 \to \neg\sigma_1 \Rightarrow \neg\beta}{\De, \sigma_1 \to \neg\sigma_1 \Rightarrow \sigma_1 \qquad \infer[\nefr]{\De, \neg\sigma_1 \Rightarrow \neg\beta}{\De, \neg\sigma_1 \Rightarrow \sigma_1}} 
\end{array}
\] 
But also, by $\nefr$ and $\implrr$, 
\[
\begin{array}{ccccccccc}
\infer[\implrr]{\Ga', \neg\alpha \Rightarrow \sigma_1 \to \neg\sigma_1}{\infer[\nefr]{\Ga', \neg\alpha, \sigma_1 \Rightarrow \neg\sigma_1}{\Ga', \neg\alpha, \sigma_1 \Rightarrow \alpha}} 
\end{array}
\] 
Now it is sufficient to notice that the formula $\sigma_1 \to \neg\sigma_1$ is in fact in the common language of $\Ga', \neg\alpha$ and $\De$ and hence, is the desired interpolant.

At this point, suppose that the derivation ends with an application of $\nefr$ of the form
\[
\begin{array}{ccccccccc}
\infer[\nefr]{\Ga, \De', \neg\alpha \Rightarrow \neg\beta}{\Ga, \De', \neg\alpha \Rightarrow \alpha} 
\end{array}
\] 
By the induction hypothesis, there exists $\sigma_1$ in the common language of $\Ga$ and $\De', \neg\alpha$ such that the sequents 
\[
\Ga \Rightarrow \sigma_1 \text{ and }
\De', \neg\alpha, \sigma_1 \Rightarrow \alpha
\]
are derivable. By a straightforward application of $\nefr$, we get a derivation of $\De', \neg\alpha, \sigma_1 \Rightarrow \neg\beta$, and we conclude that $\sigma_1$ is the desired interpolant.

Suppose that the derivation ends with an application of $\copc$ of the form 
\[
\begin{array}{ccccccccc}
\infer[\copc]{\Ga', \neg\alpha, \De \Rightarrow \neg\beta}{\Ga', \neg\alpha, \beta, \De \Rightarrow \alpha} 
\end{array}
\] 
By the induction hypothesis, there exists $\sigma_1$ in the common language of $\Ga', \neg\alpha$ and $\De, \beta$ such that the sequents 
\[
\Ga', \neg\alpha, \sigma_1 \Rightarrow \alpha,
\text{ and }
\De, \beta \Rightarrow \sigma_1
\]
are derivable. By using $\copc$ on the first sequent, we get a derivation of $\Ga', \neg\alpha \Rightarrow \neg\sigma_1$. Moreover, by $\weakr$ and $\copc$ we derive $\De, \neg\sigma_1 \Rightarrow \neg\beta$.

Suppose now that the derivation ends with an application of $\copc$ of the form 
\[
\begin{array}{ccccccccc}
\infer[\copc]{\Ga, \De', \neg\alpha \Rightarrow \neg\beta}{\Ga, \De', \neg\alpha, \beta \Rightarrow \alpha} 
\end{array}
\] 
By the induction hypothesis, there exists $\sigma_1$ in the common language of $\Ga$ and $\De', \neg\alpha, \beta$ such that the sequents 
\[
\Ga \Rightarrow \sigma_1,
\text{ and }
\De', \neg\alpha, \beta, \sigma_1 \Rightarrow \alpha
\]
are derivable. By $\copc$, we get a derivation of $\De', \neg\alpha, \sigma_1 \Rightarrow \neg\beta$. Hence, $\sigma_1$ is the desired interpolant.

Finally, suppose that the derivation ends with an application of $\anr$ 
\[
\begin{array}{ccccccccc}
\infer[\anr]{\Ga, \De \Rightarrow \neg\alpha}{\Ga, \De, \alpha \Rightarrow \neg\alpha} 
\end{array}
\] 
By the induction hypothesis, there is a formula $\sigma_1$ in the common language of $\Ga$ and $\De, \neg\alpha$ such that the sequents 
\[
\Ga \Rightarrow \sigma_1,
\text{ and }
\De, \alpha, \sigma_1 \Rightarrow \neg\alpha
\]
are derivable. By using $\anr$ on the second sequent, we get a derivation of $\De, \sigma_1 \Rightarrow \neg\alpha$ and hence, the desired interpolant is exactly $\sigma_1$.
\end{proof}


\section{Terminating Systems}\label{s:4}

For the systems considered in the previous section, a naive backward proof search strategy is clearly not terminating (cf.\ Example~\ref{ex:nonterm}).
To obtain terminating systems, we adapt a \emph{history mechanism} developed in~\cite{heuerding1996efficient} and~\cite{howe1998proof}. We mention that the method employed here is obtained by modifying what in~\cite{howe1998proof} is called `Swiss history' (or `Bern Approach'). For further literature, see, e.g.,~\cite{heuerding1996efficient,Heu98}. Another variant is given by the so-called `Scottish history' (or `St.\ Andrews Approach')~\cite{How96,How97}. 

The original idea of preventing loops in a proof search using history mechanisms consists in adding a {\em history} to a sequent in order to store information about all the sequents that have occurred so far on a branch of a proof search tree. The method employed here is based on the assumption that storing {goal} formulas---given a sequent $\Ga \Rightarrow \varphi$, we refer to the formula $\varphi$ as the \emph{goal} of the sequent---is sufficient to avoid possible loops.

We consider modifications of the $\m{G3}$ sequent calculi presented in Section~\ref{s:2}. We use $\m{G3}^{\varphi}$ to denote the modification of the $\m{G3}$ calculi obtained by considering the goals of the left rules to be restricted to atoms, negations or disjunctions. Here we call a left (respectively, right) rule an inference rule which introduces a principal formula on the left (respectively, right). 

Further, we employ a history mechanism, and obtain the sequent calculi $\m{G3}^{Hist}$ from Figures~\ref{fig:ghist1} and~\ref{fig:ghist2}. The goal formula $\varphi$ in $\m{G3}^{Hist}$ is again either an atom, a negation or a disjunction. Note that reading the $\m{G3}^{Hist}$ rules bottom-up, the context is always non-decreasing; also, the left rules, the rule $\implrr$, and the rule $\anr$ have side conditions for the context and/or for the history. The side conditions make sure that we empty the history set only if the context is {\em properly extended}. If this is not the case, we keep the history as it is and, if necessary, we enhance it with the new goal formula. If we meet a sequent whose goal is already in the history, then we know that we are dealing with a loop. Therefore, having a derivation of the sequent $\mh \h \Ga \Rightarrow \varphi$ in one of the $\m{G3}^{Hist}$ systems amounts to saying that the proof tree of $\mh \h \Ga \Rightarrow \varphi$ does not contain sequents of the form $\mh' \h \Ga \Rightarrow \psi$ for every $\psi \in \mh$. Note that the rule $\conjlr$ in the terminating system is not purely $\m{G3}$, but is a $\m{G2}$-type rule with Kleene's trick of building contraction in. We define the size of a proof tree as the number of nodes of the tree. 

\begin{figure}[h]
\begin{mdframed}[style=mdfexample1]
\[
 \begin{array}{ccccccccc}
 \infer[\axr]{\mh \h \Ga, p \Rightarrow p}{} & 
 \qquad &
 \infer[{\topr}]{\mh \h \Ga \Rightarrow \top}{} &
 \end{array}
\]
\[
 \begin{array}{ccccccccc}
 \infer[\implrrone]{\mh \h \Ga \Rightarrow \alpha\to\beta}{\emptyset \h \Ga, \alpha \Rightarrow \beta} 
 \qquad &
 \infer[\implrrtwo]{\mh \h \Ga \Rightarrow \alpha\to\beta}{\mh \h \Ga \Rightarrow \beta} 
 \smallskip \\
 \mbox{if $\alpha \not \in \Ga$}
 \qquad &
 \mbox{if $\alpha \in \Ga$}
 \end{array}
\]
\[
 \begin{array}{ccccccccc}
 \infer[\impllr]{\mh \h \Ga, \alpha\to\beta \Rightarrow \varphi}{(\varphi,\mh) \h \Ga, \alpha\to\beta \Rightarrow \alpha \qquad \emptyset \h \Ga, \alpha\to\beta, \beta \Rightarrow \varphi} 
 \smallskip \\ 
 \mbox{if $\varphi \not \in \mh$ and $\beta \not \in \Ga$}
 \end{array}
\]
\[
 \begin{array}{ccccccccc}
 \infer[\conjrr]{\mh \h \Ga \Rightarrow \alpha\land\beta}{\mh \h \Ga \Rightarrow \alpha \qquad \mh \h \Ga \Rightarrow \beta} 
 \end{array}
\]
\[
 \begin{array}{ccccccccc}
 \infer[\conjlrone]{\mh \h \Ga, \alpha\land\beta \Rightarrow \varphi}{\emptyset \h \Ga, \alpha\land\beta, \alpha \Rightarrow \varphi} 
 \qquad &
 \infer[\conjlrtwo]{\mh \h \Ga, \alpha\land\beta \Rightarrow \varphi}{\emptyset \h \Ga, \alpha\land\beta, \beta \Rightarrow \varphi} 
 \smallskip \\
 \mbox{if $\alpha \not \in \Ga$}
 \qquad &
 \mbox{if $\beta \not \in \Ga$}
 \end{array}
\]
\[
 \begin{array}{ccccccccc}
 \infer[\disjrrone]{\mh \h \Ga \Rightarrow \alpha\lor\beta}{\mh \h \Ga \Rightarrow \alpha} 
 \qquad &
 \infer[\disjrrtwo]{\mh \h \Ga \Rightarrow \alpha\lor\beta}{\mh \h \Ga \Rightarrow \beta} 
 \end{array}
\]
\[
 \begin{array}{ccccccccc}
 \infer[\disjlr]{\mh \h \Ga, \alpha\lor\beta \Rightarrow \varphi}{\emptyset \h \Ga, \alpha\lor\beta, \alpha \Rightarrow \varphi \qquad \emptyset \h \Ga, \alpha\lor\beta, \beta \Rightarrow \varphi} 
 \smallskip \\ 
 \mbox{if $\alpha, \beta \not \in \Ga$}
 \end{array}
\]
\end{mdframed}
\caption{Rules for positive connectives in the systems $\m{G3}^{Hist}$} \label{fig:ghist1}
\end{figure}

\begin{figure}[h]
\begin{mdframed}[style=mdfexample1]
\[
 \begin{array}{ccccccccc}
 \infer[\none]{\mh \h \Ga, \neg\alpha \Rightarrow \neg\beta}{\emptyset \h \Ga, \neg\alpha, \beta \Rightarrow \alpha \qquad \emptyset \h \Ga, \neg\alpha, \alpha \Rightarrow \beta} 
 \smallskip \\
 \mbox{if $\beta \not \in \Ga\cup\{\neg\alpha\}$ and $\alpha \not\in \Ga$}
 \end{array}
\]
 \[
 \begin{array}{ccccccccc}
 \infer[\ntwo]{\mh \h \Ga, \neg\alpha \Rightarrow \neg\beta}{\emptyset \h \Ga, \neg\alpha, \beta \Rightarrow \alpha \qquad \mh \h \Ga, \neg\alpha \Rightarrow \beta} 
 \smallskip \\
 \mbox{if $\beta \not \in \Ga\cup\{\neg\alpha\}$ and $\alpha \in \Ga$}
 \end{array}
\]
\[
 \begin{array}{ccccccccc}
 \infer[\nthree]{\mh \h \Ga, \neg\alpha \Rightarrow \neg\beta}{(\neg\beta,\mh) \h \Ga, \neg\alpha \Rightarrow \alpha \qquad \emptyset \h \Ga, \neg\alpha, \alpha \Rightarrow \beta} 
 \smallskip \\
 \mbox{if $\neg\beta \not \in \mh$, $\beta \in \Ga\cup\{\neg\alpha\}$ and $\alpha \not\in \Ga$}
 \end{array}
 \]
\[
 \begin{array}{ccccccccc}
 \infer[\nfour]{\mh \h \Ga, \neg\alpha \Rightarrow \neg\beta}{(\neg\beta,\mh) \h \Ga, \neg\alpha \Rightarrow \alpha \qquad \mh \h \Ga, \neg\alpha \Rightarrow \beta} 
 \smallskip \\
 \mbox{if $\neg\beta \not \in \mh$, $\beta \in \Ga\cup\{\neg\alpha\}$ and $\alpha \in \Ga$}
 \end{array}
\]
\[
 \begin{array}{ccccccccc}
 \infer[\nefr]{\mh \h \Ga, \neg\alpha \Rightarrow \neg\beta}{(\neg\beta, \mh) \h \Ga, \neg\alpha \Rightarrow \alpha} 
 \qquad \mbox{if $\neg\beta \not \in \mh$}
 \end{array}
\]
\[
 \begin{array}{ccccccccc}
 \infer[\copcone]{\mh \h \Ga, \neg\alpha \Rightarrow \neg\beta}{\emptyset \h \Ga, \neg\alpha, \beta \Rightarrow \alpha} 
 \qquad &
 \infer[\copctwo]{\mh \h \Ga, \neg\alpha \Rightarrow \neg\beta}{(\neg\beta,\mh) \h \Ga, \neg\alpha \Rightarrow \alpha} 
 \smallskip \\
 \mbox{if $\beta \not \in \Ga\cup\{\neg\alpha\}$}
 \qquad &
 \mbox{if $\neg\beta \not \in \mh$ and $\beta \in \Ga\cup\{\neg\alpha\}$}
 \end{array}
\]
\[
 \begin{array}{ccccccccc}
 \infer[\anr]{\mh \h \Ga \Rightarrow \neg\alpha}{\emptyset \h \Ga, \alpha \Rightarrow \neg\alpha} 
 \qquad \mbox{if $\alpha \not \in \Ga$}
 \end{array}
\]
\end{mdframed}
\caption{Rules for negation in the systems $\m{G3}^{Hist}$} \label{fig:ghist2}
\end{figure}

The first step towards a proof of the equivalence between $\m{G3}$ and $\m{G3}^{Hist}$ is the following.

\begin{lemma}
The weakening rule is admissible in $\m{G3}^{\varphi}$. 
\end{lemma}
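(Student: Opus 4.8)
The plan is to prove the claim by induction on the height of a given $\m{G3}^{\varphi}$-derivation $\vdash \Ga \seq \varphi$, establishing the stronger statement that $\Ga, \alpha \seq \varphi$ admits a $\m{G3}^{\varphi}$-derivation of no greater height; weakening by a finite multiset then follows by iterating. The argument parallels the one for Proposition~\ref{prop:w} in the unrestricted system, so the real task is only to check that the passage from $\m{G3}$ to $\m{G3}^{\varphi}$ does not obstruct it.

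First I would dispose of the base cases: if the derivation is an instance of $\axr$ or $\topr$, then so is the sequent obtained by adjoining $\alpha$ to the antecedent, and there is nothing more to do. For the inductive step I would argue by cases on the last rule $R$. If $R$ is a right rule --- $\implrr$, $\conjrr$, $\disjrrone$, $\disjrrtwo$, or $\anr$ --- I apply the induction hypothesis to each premise and re-apply $R$; the context of every premise is extended in exactly the same way as that of the conclusion, so the result is again a correct instance. If $R$ is a left rule --- $\conjlr$, $\disjlr$, $\impllr$ --- or one of the negation rules $\nr$, $\nefr$, $\copc$, I likewise apply the induction hypothesis to the premises (each of which now carries $\alpha$ in its context) and re-apply $R$.

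The one point that genuinely needs $\m{G3}^{\varphi}$-specific attention --- and the place where the argument could in principle break --- is whether the re-applied instance of a left rule still satisfies the proviso of $\m{G3}^{\varphi}$, namely that its goal be an atom, a negation, or a disjunction. But adjoining $\alpha$ to antecedents changes no goal anywhere in the derivation, so if the original application of the left rule was legitimate in $\m{G3}^{\varphi}$ then so is the new one, its conclusion having the very same goal (for the negation rules this holds trivially, their goal being a negation by the shape of the rule). Hence I expect no real obstacle: the proof is a routine rule-by-rule verification, the only thing deserving explicit mention being that the side conditions of $\m{G3}^{\varphi}$ constrain \emph{goals} and not contexts, so they are preserved under weakening.
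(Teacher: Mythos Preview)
Your argument is correct. The only delicate point is precisely the one you isolate: the $\m{G3}^{\varphi}$ restriction constrains the \emph{goal} of a left rule, never its context, so adjoining $\alpha$ to every antecedent in a derivation cannot break any side condition. The rest is the standard height-preserving weakening argument.

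By way of comparison, the paper does not spell this out but instead dispatches the lemma in one line as an ``easy consequence'' of Howe's Lemma~4.1 (which handles the positive rules under the goal restriction) together with Proposition~\ref{prop:w} (which handles the negation rules, whose conclusions are negations and hence automatically satisfy the goal condition). Your direct, self-contained induction unpacks exactly what that citation is gesturing at; the key observation---that the restriction is on goals and is therefore stable under context extension---is the same in both.
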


\begin{proof}
Easy consequence of~\cite[Lemma 4.1]{howe1998proof} and of Proposition~\ref{prop:w}.
\end{proof}

The following result ensures that we can restrict ourselves to $\m{G3}^{\varphi}$.

\begin{lemma}\label{lemma:eq1}
The sequent calculi $\m{G3}$ and $\m{G3}^{\varphi}$ are equivalent. 
\end{lemma}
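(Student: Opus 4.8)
The claim is that $\m{G3}$ and $\m{G3}^{\varphi}$ prove the same sequents, where $\m{G3}^{\varphi}$ differs only by forcing the goal of every left rule (and the rules $\nr$-type, $\copc$, $\anr$) to be an atom, a negation, or a disjunction. One direction is trivial: every $\m{G3}^{\varphi}$ derivation is literally a $\m{G3}$ derivation, since the side condition on goals is just a restriction on which instances one is allowed to use. So the work is entirely in the converse: given a $\m{G3}$ derivation of $\Ga \Rightarrow \varphi$, produce a $\m{G3}^{\varphi}$ derivation of the same sequent.

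The strategy I would use is a standard proof-search normalization / permutation-of-rules argument, carried out by induction on the height of the $\m{G3}$ derivation. The key observation is that the right rules $\conjrr$, $\disjrrone$, $\disjrrtwo$, $\implrr$ are (height-preserving) invertible, and that the only offending situation is a left rule (or $\nr$, $\copc$, $\anr$, whose conclusions all have a negation as goal, hence are fine) applied with a \emph{compound positive} goal $\varphi$, i.e. $\varphi = \alpha \to \beta$ or $\varphi = \alpha \land \beta$ (the cases $\varphi = p$, $\varphi = \neg\alpha$, $\varphi = \alpha \lor \beta$, $\varphi = \top$ being already admissible). In that case the plan is to permute the outermost right connective of the goal \emph{downward} past the left rule. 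Concretely: if the last inference is a left rule $R$ with principal formula $\psi$ on the left, premises $\Ga' \Rightarrow \varphi$ (and possibly an auxiliary premise $\Ga'' \Rightarrow \chi$ whose goal is a subformula, hence unproblematic, as in the left premise of $\impllr$), and $\varphi = \alpha \land \beta$, then from the premise derivation I first apply the induction hypothesis, then use height-preserving invertibility of $\conjrr$ in $\m{G3}$ to get derivations of $\Ga' \Rightarrow \alpha$ and $\Ga' \Rightarrow \beta$ of no greater height, apply the induction hypothesis again to each, reapply $R$ to obtain $\Ga \Rightarrow \alpha$ and $\Ga \Rightarrow \beta$, and finish with $\conjrr$; similarly for $\varphi = \alpha \to \beta$, pushing the context $\alpha$ inside via invertibility of $\implrr$ and then, after the left rule, reintroducing $\to$ with $\implrr$. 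Weakening-admissibility (Proposition~\ref{prop:w}) is used to carry the extra context $\alpha$ through the auxiliary premises when $R = \impllr$ or $R = \disjlr$ or $R$ is a two-premise negation rule.

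The bookkeeping that makes this terminate is a secondary induction (or a suitable measure) on the weight $w(\varphi)$ of the goal: each permutation step strictly decreases the weight of the compound positive goal while not increasing derivation height, so after finitely many permutations every left rule in the tree is applied with a goal that is an atom, a negation, or a disjunction, at which point the induction hypothesis on height finishes the job. I would organize the induction as a lexicographic induction on the pair $(w(\varphi), h)$ where $h$ is the height of the $\m{G3}$ derivation, mirroring the structure of the contraction and cut proofs already given in the paper.

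\textbf{Main obstacle.} The delicate point is not any single rule but getting the induction measure to actually decrease in every case simultaneously: when we permute a right connective of the goal past a two-premise left rule such as $\impllr$ or $\disjlr$, the auxiliary premise (e.g. $\Ga, \alpha\to\beta \Rightarrow \alpha$ for $\impllr$) already has an unproblematic goal, but we must be careful that reapplying the left rule \emph{after} the induction hypothesis does not reintroduce a larger goal, and that the heights of the premises supplied to the reapplied rule are controlled — this is exactly why height-preserving invertibility (rather than mere invertibility) of the positive right rules is essential and why it is singled out in the text before Proposition~\ref{prop:w}. A second, more routine subtlety: one must check that the negation rules $\nr$, $\nefr$, $\copc$, $\anr$ never create a problematic goal, which is immediate since the goal of their conclusion is always a negation, and that the left rule for $\land$ used in $\m{G3}^{\varphi}$ at this stage is still the $\m{G3}$-style $\conjlr$ (the Kleene-trick $\m{G2}$-style variant only enters later, in $\m{G3}^{Hist}$), so no mismatch arises here.
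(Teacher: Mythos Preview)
Your plan is essentially the paper's own: reduce the only problematic case (a left rule applied with an implicational or conjunctive goal) by permuting the outermost right connective of the goal below the left rule, using height-preserving invertibility of $\implrr$ and $\conjrr$ together with admissibility of weakening; the paper packages the same move as ``apply the height IH to the premises, note that in the resulting $\m{G3}^{\varphi}$ derivation the compound goal is necessarily principal (introduced by the corresponding right rule), and permute'', deferring the permutation bookkeeping to \cite[Proposition~4.1]{howe1998proof}.

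There is, however, one genuine slip in your bookkeeping: the lexicographic order $(w(\varphi), h)$ is the wrong way round. With weight as the primary component the IH already fails in the \emph{easy} cases---e.g., an $\impllr$ with atomic goal $p$ has left premise $\Ga, \alpha\to\beta \Rightarrow \alpha$, and $w(\alpha)$ may vastly exceed $w(p)=1$, so $(w(\alpha), h_1) \not< (1, h)$. Your own prose in fact describes the correct structure (``secondary induction on $w(\varphi)$'', ``the induction hypothesis on height finishes the job''): primary induction on derivation height, with an inner induction on goal-weight at the permutation step where height is only preserved---i.e., the order $(h, w(\varphi))$. With that swap your argument goes through exactly as sketched. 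The analogy with the contraction and cut proofs is what misled you: there the formula whose weight is tracked is the fixed cut/contraction formula, constant along the recursion on height, whereas here the goal changes from premise to premise.
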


\begin{proof}
It is trivial that if a sequent is provable in $\m{G3}^{\varphi}$, then is provable in $\m{G3}$. For the converse, the proof proceeds by induction on the height of a derivation. 

Consider a $\m{G3}$ inference which is not a $\m{G3}^{\varphi}$ inference. This must be an instance of a left rule with an implicational or a conjunctive goal. By the induction hypothesis, we have $\m{G3}^{\varphi}$ derivations of the premises. This implies the implicational or conjunctive goals to be principal formulas in the premises. Hence, the result follows from the proof of~\cite[Proposition~4.1]{howe1998proof}.
\end{proof}

\begin{lemma}
The following contraction rule is admissible in $\m{G3}^{Hist}$
\[
 \begin{array}{ccccccccc}
 \infer{\mh \h \Ga, \alpha \Rightarrow \varphi}{\mh \h \Ga, \alpha, \alpha \Rightarrow \varphi} 
 \end{array}
\]
\end{lemma}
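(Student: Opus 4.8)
The plan is to argue by induction on the size of the derivation of the premise $\mh \h \Ga, \alpha, \alpha \Rightarrow \varphi$, with a case distinction on the last rule applied. Before the case analysis I would record two structural observations that make the inductive step uniform. \emph{(i)} Every rule of $\m{G3}^{Hist}$, read bottom-up, has a non-decreasing context, and no rule moves a formula of the antecedent into the history (the history only ever grows by a goal or a negated goal); hence the two displayed copies of $\alpha$ both reappear in each premise (except when $\alpha$ is itself principal), and the histories attached to the premises are untouched by contracting the antecedent. \emph{(ii)} Every side condition occurring in the calculus --- $\alpha \not\in \Ga$, $\alpha \in \Ga$, $\beta \not\in \Ga\cup\{\neg\alpha\}$, $\varphi \not\in \mh$, $\neg\beta \not\in \mh$, etc. --- is a membership statement about the underlying \emph{sets} of the rule's displayed context and of the history; since contraction alters only a multiplicity, it changes neither set, so a side condition that licensed the original rule instance still licenses the corresponding instance after contraction (and conversely). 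Together these remarks show it is always legitimate to apply the induction hypothesis to the premise(s) and then re-apply the very same rule.

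The routine cases are those in which $\alpha$ is not principal in the last rule, together with the base cases $\axr$ and $\topr$ (the contracted conclusion is again an axiom, as membership of $p$ in the antecedent set is unchanged). There one applies the induction hypothesis to each premise --- this identifies the two copies of $\alpha$ inside it --- and re-applies the rule, matching up histories and side conditions via \emph{(i)} and \emph{(ii)}. The cases that need genuine attention are those in which $\alpha$ is principal in a rule that \emph{reproduces} its principal formula in one or both premises: this happens for $\impllr$ (where $\alpha=\gamma\to\delta$ survives in both premises), for the Kleene-style rules $\conjlrone$ and $\conjlrtwo$ (where $\alpha=\gamma\land\delta$ survives in the premise), and for $\none$--\nfour, $\copcone$, $\copctwo$ and $\nefr$ (where $\alpha=\neg\gamma$ survives in their premises). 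In each of these, $\mh \h \Ga, \alpha, \alpha \Rightarrow \varphi$ is concluded from premises that still contain two copies of $\alpha$; I would apply the induction hypothesis to each premise to delete the surplus copy, obtaining premises with a single $\alpha$ in the shape required by the instance of the rule with context $\Ga$, and then re-apply that rule once (with the same, possibly enriched, history $(\varphi,\mh)$ or $(\neg\beta,\mh)$, and with the side conditions still met by \emph{(ii)}), yielding $\mh \h \Ga, \alpha \Rightarrow \varphi$. Here one uses that the statement of the lemma is quantified over all histories, so invoking the induction hypothesis on a premise carrying a nonempty history is unproblematic.

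I expect the only real work to be the bookkeeping in this last group of cases: one must check, rule by rule, that after removing the extra copy of $\alpha$ the premises land exactly in the form demanded by the conclusion-instance with displayed context $\Ga$ (rather than $\Ga,\alpha$), and that the side conditions --- which are phrased relative to the rule's displayed context and history --- still hold, which is precisely where observation \emph{(ii)} does its job. It is worth emphasising in the write-up that, by contrast with the $\m{G3}$ contraction argument, no lexicographic measure on $w(\alpha)$ and no appeal to invertibility are required: because $\conjlr$ is taken in the $\m{G2}$ form with contraction built in, the otherwise problematic ``$\conjlr$ with principal $\alpha$'' case is dispatched by a single appeal to the induction hypothesis on a strictly smaller derivation, so plain induction on the size of the derivation suffices.
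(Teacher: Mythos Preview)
Your proposal is correct and follows essentially the same approach as the paper: induction on the height of the derivation with a case analysis on the last rule, applying the induction hypothesis to the premises and re-applying the same rule while checking that the side conditions---which, as you note, depend only on the underlying sets---are preserved (the paper spells out only the negation rules case by case, while you abstract the common pattern via observations \emph{(i)} and \emph{(ii)}). One small omission in your enumeration: $\disjlr$ also reproduces its principal formula $\gamma\lor\delta$ in both premises and so belongs in your ``genuine attention'' list alongside $\impllr$ and $\conjlrone$/$\conjlrtwo$, though the argument there is identical.
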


\begin{proof}
By induction on the height of a derivation of the premise. We focus on the induction step, and on the non-intuitionistic rules. 

Suppose that the last step in the derivation is an instance of $\none$ of the form 
\[
 \begin{array}{ccccccccc}
 \infer[\none]{\mh \h \Ga, \neg\alpha, \neg\alpha \Rightarrow \neg\beta}{\emptyset \h \Ga, \neg\alpha, \neg\alpha, \beta \Rightarrow \alpha \qquad \emptyset \h \Ga, \neg\alpha, \neg\alpha, \alpha \Rightarrow \beta}
 \end{array}
\]
with $\beta \not\in \Ga\cup\{\neg\alpha\}$ and $\alpha \not\in \Ga$. By the induction hypothesis twice, 
\[
\vdash \emptyset \h \Ga, \neg\alpha, \beta \Rightarrow \alpha\text{ and }\emptyset \h \Ga, \neg\alpha, \alpha \Rightarrow \beta. 
\]
But then, by $\none$, 
\[
\vdash \mh \h \Ga, \neg\alpha \Rightarrow \neg\beta.
\]
Suppose that the last step in the derivation is an instance of $\ntwo$ of the form 
\[
 \begin{array}{ccccccccc}
 \infer[\ntwo]{\mh \h \Ga, \neg\alpha, \neg\alpha \Rightarrow \neg\beta}{\emptyset \h \Ga, \neg\alpha, \neg\alpha, \beta \Rightarrow \alpha \qquad \mh \h \Ga, \neg\alpha, \neg\alpha \Rightarrow \beta}
 \end{array}
\]
with $\beta \not\in \Ga\cup\{\neg\alpha\}$ and $\alpha \in \Ga$. By the induction hypothesis twice, 
\[
\vdash \emptyset \h \Ga, \neg\alpha, \beta \Rightarrow \alpha\text{ and }\mh \h \Ga, \neg\alpha \Rightarrow \beta. 
\]
But then, by $\ntwo$, 
\[
\vdash \mh \h \Ga, \neg\alpha \Rightarrow \neg\beta.
\]
Suppose the last step in the derivation to be an instance of $\nthree$ as follows 
\[
 \begin{array}{ccccccccc}
 \infer[\nthree]{\mh \h \Ga, \neg\alpha, \neg\alpha \Rightarrow \neg\beta}{(\neg\beta, \mh) \h \Ga, \neg\alpha, \neg\alpha \Rightarrow \alpha \qquad \emptyset \h \Ga, \neg\alpha, \neg\alpha, \alpha \Rightarrow \beta}
 \end{array}
\]
with $\beta \in \Ga\cup\{\neg\alpha\}$, $\neg\beta \not\in \mh$ and $\alpha \not\in \Ga$. By the induction hypothesis twice, we get 
\[
\vdash (\neg\beta, \mh) \h \Ga, \neg\alpha \Rightarrow \alpha\text{ and }\emptyset \h \Ga, \neg\alpha, \alpha \Rightarrow \beta. 
\]
Hence, via an application of $\nthree$, 
\[
\vdash \mh \h \Ga, \neg\alpha \Rightarrow \neg\beta.
\]
Suppose the last step in the derivation to be an instance of $\nfour$ as follows 
\[
 \begin{array}{ccccccccc}
 \infer[\nfour]{\mh \h \Ga, \neg\alpha, \neg\alpha \Rightarrow \neg\beta}{(\neg\beta, \mh) \h \Ga, \neg\alpha, \neg\alpha \Rightarrow \alpha \qquad \mh \h \Ga, \neg\alpha, \neg\alpha \Rightarrow \beta}
 \end{array}
\]
with $\beta \in \Ga\cup\{\neg\alpha\}$, $\neg\beta \not\in \mh$ and $\alpha \in \Ga$. By the induction hypothesis twice, we get 
\[
\vdash (\neg\beta, \mh) \h \Ga, \neg\alpha \Rightarrow \alpha\text{ and }\mh \h \Ga, \neg\alpha \Rightarrow \beta. 
\]
Hence, via an application of $\nfour$, 
\[
\vdash \mh \h \Ga, \neg\alpha \Rightarrow \neg\beta.
\]
Now, suppose that the last step in the derivation is an instance of $\none$ of the form 
\[
 \begin{array}{ccccccccc}
 \infer[\none]{\mh \h \Ga, \alpha, \alpha, \neg\gamma \Rightarrow \neg\beta}{\emptyset \h \Ga, \alpha, \alpha, \neg\gamma, \beta \Rightarrow \gamma \qquad \emptyset \h \Ga, \alpha, \alpha, \neg\gamma, \gamma \Rightarrow \beta}
 \end{array}
\]
where $\beta \not\in \Ga\cup\{\alpha, \neg\gamma\}$ and $\gamma \not\in \Ga\cup\{\alpha\}$. By applying the induction hypothesis twice, 
\[
\vdash \emptyset \h \Ga, \alpha, \neg\gamma, \beta \Rightarrow \gamma \text{ and } \vdash \emptyset \h \Ga, \alpha, \neg\gamma, \gamma \Rightarrow \beta.
\]
Therefore, 
\[
\vdash \mh \h \Ga, \alpha, \neg\gamma \Rightarrow \neg\beta.
\]
Now, suppose that the last step in the derivation is an instance of $\ntwo$ of the form 
\[
 \begin{array}{ccccccccc}
 \infer[\ntwo]{\mh \h \Ga, \alpha, \alpha, \neg\gamma \Rightarrow \neg\beta}{\emptyset \h \Ga, \alpha, \alpha, \neg\gamma, \beta \Rightarrow \gamma \qquad \mh \h \Ga, \alpha, \alpha, \neg\gamma \Rightarrow \beta}
 \end{array}
\]
where $\beta \not\in \Ga\cup\{\alpha, \neg\gamma\}$ and $\gamma \in \Ga\cup\{\alpha\}$. By applying the induction hypothesis twice, 
\[
\vdash \emptyset \h \Ga, \alpha, \neg\gamma, \beta \Rightarrow \gamma \text{ and } \vdash \mh \h \Ga, \alpha, \neg\gamma \Rightarrow \beta.
\]
Therefore, 
\[
\vdash \mh \h \Ga, \alpha, \neg\gamma \Rightarrow \neg\beta.
\]
Suppose now the last step in the derivation to be an instance of $\nthree$, 
\[
 \begin{array}{ccccccccc}
 \infer[\nthree]{\mh \h \Ga, \alpha, \alpha, \neg\gamma \Rightarrow \neg\beta}{(\neg\beta, \mh) \h \Ga, \alpha, \alpha, \neg\gamma \Rightarrow \gamma \qquad \emptyset \h \Ga, \alpha, \alpha, \neg\gamma, \gamma \Rightarrow \beta}
 \end{array}
\]
with $\beta \in \Ga\cup\{\alpha, \neg\gamma\}$, $\neg\beta \not\in \mh$ and $\gamma \not\in \Ga\cup\{\alpha\}$. By the induction hypothesis twice, we get 
\[
\vdash (\neg\beta, \mh) \h \Ga, \alpha, \neg\gamma \Rightarrow \gamma \text{ and } \vdash \emptyset \h \Ga, \alpha, \neg\gamma, \gamma \Rightarrow \beta.
\]
Hence, an application of $\nthree$ leads to the desired conclusion.
Suppose now the last step in the derivation to be an instance of $\nfour$, 
\[
 \begin{array}{ccccccccc}
 \infer[\nfour]{\mh \h \Ga, \alpha, \alpha, \neg\gamma \Rightarrow \neg\beta}{(\neg\beta, \mh) \h \Ga, \alpha, \alpha, \neg\gamma \Rightarrow \gamma \qquad \mh \h \Ga, \alpha, \alpha, \neg\gamma \Rightarrow \beta}
 \end{array}
\]
with $\beta \in \Ga\cup\{\alpha, \neg\gamma\}$, $\neg\beta \not\in \mh$ and $\gamma \in \Ga\cup\{\alpha\}$. By the induction hypothesis twice, we get 
\[
\vdash (\neg\beta, \mh) \h \Ga, \alpha, \neg\gamma \Rightarrow \gamma \text{ and } \vdash \mh \h \Ga, \alpha, \neg\gamma \Rightarrow \beta.
\]
Hence, an application of $\nfour$ leads to the desired conclusion.

Suppose that the last step in the derivation is an application of $\nefr$ of the form
\[
 \begin{array}{ccccccccc}
 \infer[\nefr]{\mh \h \Ga, \neg\alpha, \neg\alpha \Rightarrow \neg\beta}{(\neg\beta, \mh) \h \Ga, \neg\alpha, \neg\alpha \Rightarrow \alpha} 
 \end{array}
\]
with $\neg\beta \not\in \mh$. By the induction hypothesis, 
\[
\vdash (\neg\beta, \mh) \h \Ga, \neg\alpha \Rightarrow \alpha,
\]
and hence, by $\nefr$,
\[
\vdash \mh \h \Ga, \neg\alpha \Rightarrow \neg\beta.
\]
Now, suppose that the last step in the derivation is an instance of $\nefr$ as follows 
\[
 \begin{array}{ccccccccc}
 \infer[\nefr]{\mh \h \Ga, \alpha, \alpha, \neg\gamma \Rightarrow \neg\beta}{(\neg\beta, \mh) \h \Ga, \alpha, \alpha, \neg\gamma \Rightarrow \gamma} 
 \end{array}
\]
where $\neg\beta \not\in \mh$. Again, by the induction hypothesis, 
\[
\vdash (\neg\beta, \mh) \h \Ga, \alpha, \neg\gamma \Rightarrow \gamma.
\]
Hence, an appropriate application of $\nefr$ gives us the desired result.

Suppose that the last step in the derivation is an instance of $\copcone$ of the form 
\[
 \begin{array}{ccccccccc}
 \infer[\copcone]{\mh \h \Ga, \neg\alpha, \neg\alpha \Rightarrow \neg\beta}{\emptyset \h \Ga, \neg\alpha, \neg\alpha, \beta \Rightarrow \alpha}
 \end{array}
\]
with $\beta \not \in \Ga\cup\{\neg\alpha\}$. By the induction hypothesis, 
\[
 \vdash \emptyset \h \Ga, \neg\alpha, \beta \Rightarrow \alpha.
\]
But then, 
\[
 \begin{array}{ccccccccc}
 \infer[\copcone]{\mh \h \Ga, \neg\alpha, \Rightarrow \neg\beta}{\emptyset \h \Ga, \neg\alpha, \beta \Rightarrow \alpha}
 \end{array}
\]
Suppose now that the last step in the derivation is an instance of $\copctwo$ of the form 
\[
 \begin{array}{ccccccccc}
 \infer[\copctwo]{\mh \h \Ga, \neg\alpha, \neg\alpha \Rightarrow \neg\beta}{(\neg\beta, \mh) \h \Ga, \neg\alpha, \neg\alpha \Rightarrow \alpha}
 \end{array}
\]
with $\beta \in \Ga\cup\{\neg\alpha\}$ and $\neg\beta \not \in \mh$. By the induction hypothesis, 
\[
\vdash (\neg\beta, \mh) \h \Ga, \neg\alpha \Rightarrow \alpha
\]
and hence, by $\copctwo$, we can conclude 
\[
\vdash \mh \h \Ga, \neg\alpha \Rightarrow \neg\beta.
\]
Now suppose the last step in the derivation to be an instance of $\copcone$ of the form 
\[
 \begin{array}{ccccccccc}
 \infer[\copcone]{\mh \h \Ga, \alpha, \alpha, \neg\gamma \Rightarrow \neg\beta}{\emptyset \h \Ga, \alpha, \alpha, \neg\gamma, \beta \Rightarrow \gamma}
 \end{array}
\]
with $\beta \not \in \Ga\cup\{\alpha,\neg\gamma\}$. By the induction hypothesis, we get 
\[
\vdash \emptyset \h \Ga, \alpha, \neg\gamma, \beta \Rightarrow \gamma.
\]
But then, we can conclude 
\[
\mh \h \Ga, \alpha, \neg\gamma \Rightarrow \neg\beta
\]
Now, suppose that the last step in the derivation is an instance of $\copctwo$ of the form 
\[
 \begin{array}{ccccccccc}
 \infer[\copctwo]{\mh \h \Ga, \alpha, \alpha, \neg\gamma \Rightarrow \neg\beta}{(\neg\beta, \mh) \h \Ga, \alpha, \alpha, \neg\gamma \Rightarrow \gamma}
 \end{array}
\]
with $\beta \in \Ga\cup\{\alpha,\neg\gamma\}$ and $\neg\beta \not \in \mh$. By the induction hypothesis, 
\[
\vdash (\neg\beta, \mh) \h \Ga, \alpha, \neg\gamma \Rightarrow \gamma.
\]
Hence, by $\copctwo$, 
\[
\vdash \mh \h \Ga, \alpha, \neg\gamma \Rightarrow \neg\beta.
\]

Finally, suppose the last step in the derivation to be an instance of $\anr$ of the form 
\[
 \begin{array}{ccccccccc}
 \infer[\anr]{\mh \h \Ga, \alpha, \alpha \Rightarrow \neg\beta}{\emptyset \h \Ga, \alpha, \alpha, \beta \Rightarrow \neg\beta} 
 \end{array}
\]
where $\beta \not\in \Ga\cup\{\alpha\}$. By the induction hypothesis, 
\[
\vdash \emptyset \h \Ga, \alpha, \beta \Rightarrow \neg\beta.
\]
Hence, by $\anr$,
\[
\vdash \mh \h \Ga, \alpha \Rightarrow \neg\beta.
\]

\end{proof}

For the next result, we start from a $\m{G3}^{\varphi}$ derivation of a sequent $\Ga \Rightarrow \varphi$, and construct a proof tree in $\m{G3}^{Hist}$ for the corresponding sequent $\emptyset \h \Ga \Rightarrow \varphi$. The proof trees that we are going to use are {\em hybrid}, in the sense that we deal with fragments of $\m{G3}^{Hist}$ proof trees combined with $\m{G3}^{\varphi}$ proof trees. Namely, every branch of the $\m{G3}^{Hist}$ proof tree that do not have axiom leaves ends with a $\m{G3}^{\varphi}$ proof tree. We work from the root up, and hence we focus on a uppermost history sequent $\mh \h \Ga \Rightarrow \varphi$ with non-history premises. More precisely, the argument below shows how the history mechanism allows us to detect and remove loops. The most interesting cases are the ones concerning rules with side conditions for the history. If the history side condition is not satisfied (i.e., $\varphi \in \mh$), it means that we are dealing with a (non-trivial) loop: the same sequent $\mh' \h \Ga \Rightarrow \varphi$ already occurs previously in the proof tree (with $\mh' \subseteq \mh$). This has the further consequence that the history has not been reset at any point in this fragment of the tree, because the context is stable (i.e., has not been properly extended). At this point, we know how to remove the loop, by removing the fragment of the tree from, but not including, the sequent $\mh' \h \Ga \Rightarrow \varphi$ up to and including $\mh \h \Ga \Rightarrow \varphi$. 

\begin{theorem}
The calculi $\m{G3}$ and $\m{G3}^{Hist}$ are equivalent. That is, a sequent $\Ga \Rightarrow \varphi$ is provable in $\m{G3}$ if, and only if, the sequent $\emptyset \h \Ga \Rightarrow \varphi$ is provable in $\m{G3}^{Hist}$.
\end{theorem}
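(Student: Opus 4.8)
The plan is to invoke Lemma~\ref{lemma:eq1} and reduce the statement to the equivalence of $\m{G3}^{\varphi}$ and $\m{G3}^{Hist}$; it then suffices to establish (i) if $\m{G3}^{Hist} \vdash \emptyset \h \Ga \Rightarrow \varphi$ then $\m{G3} \vdash \Ga \Rightarrow \varphi$, and (ii) if $\m{G3}^{\varphi} \vdash \Ga \Rightarrow \varphi$ then $\m{G3}^{Hist} \vdash \emptyset \h \Ga \Rightarrow \varphi$. Direction (i) is routine: by induction on the $\m{G3}^{Hist}$ derivation one erases all history annotations and replaces each $\m{G3}^{Hist}$ inference by a short $\m{G3}$ derivation with the same history-free conclusion. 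The only steps needing care are those whose premises fail to copy a formula already present in the context -- the $\m{G2}$-style $\conjlr$ and the variants $\implrrtwo$, $\ntwo$, $\nthree$, $\nfour$, $\copctwo$ -- for which the missing occurrences are restored using height-preserving admissibility of weakening (Proposition~\ref{prop:w}) and, for the conjunction rule, of contraction together with invertibility of $\conjlr$.

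Direction (ii) is the heart of the theorem, and I would carry it out through the \emph{hybrid proof tree} construction indicated before the statement. Fix a $\m{G3}^{\varphi}$ derivation $D$ of $\Ga \Rightarrow \varphi$. A hybrid tree is a finite tree whose internal nodes are $\m{G3}^{Hist}$ inferences and each of whose open leaves carries a $\m{G3}^{\varphi}$ derivation of the history-free part of the leaf sequent. Starting from the one-node hybrid tree $\emptyset \h \Ga \Rightarrow \varphi$ carrying $D$, one transforms it repeatedly, working from the root towards the leaves. At a typical step one picks an open leaf $\mh \h \Ga' \Rightarrow \psi$ carrying a derivation $D'$; if $D'$ is an axiom nothing is done, otherwise let $R$ be its last rule. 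The context side conditions of the $\m{G3}^{Hist}$ rules are exhaustive, so they single out a $\m{G3}^{Hist}$ rule corresponding to $R$ (occasionally a short composite of such rules, as for $\conjlr$, or the step is dispensed with outright via contraction); if its history side condition, where present, is met, one applies it and attaches to the new open leaves the immediate subderivations of $D'$, adjusted by height-preserving weakening and contraction in $\m{G3}^{\varphi}$ to fit the premises.

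The interesting case is when the history side condition fails, i.e.\ $\psi \in \mh$: a loop is then being detected, exactly as in Example~\ref{ex:nonterm}. Reading up the branch, $\psi$ was added to the history at its most recent ancestor $\mu$ by an application of one of $\impllr, \nthree, \nfour, \nefr, \copctwo$, each of which inserts its own goal; hence $\mu$ has goal $\psi$, and -- since in $\m{G3}^{Hist}$ the history is emptied precisely when the context is properly extended while the context never shrinks -- the context is a single fixed multiset from $\mu$ down to the current leaf. Thus $\mu$ and the current leaf carry the same history-free sequent $\Ga' \Rightarrow \psi$, and one discards the part of the hybrid tree strictly below $\mu$ up to and including the current leaf, reattaching $D'$ at $\mu$.

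Finally one must show this process terminates; as soon as it does, every open leaf is an axiom and the hybrid tree is the sought $\m{G3}^{Hist}$ derivation of $\emptyset \h \Ga \Rightarrow \varphi$. I would order hybrid trees lexicographically by the multiset of heights of the $\m{G3}^{\varphi}$ derivations still attached to the open leaves (compared in the multiset order), then by the number of internal $\m{G3}^{Hist}$ nodes: an ordinary rule application replaces one attached height by the strictly smaller heights of its immediate subderivations (weakening and contraction being height-preserving), so the first component strictly decreases; a loop removal never increases the first component and strictly decreases the second, since the reattachment node $\mu$ passes from internal to leaf. This order is well-founded, so the construction halts, and combining (i), (ii) and Lemma~\ref{lemma:eq1} yields the theorem. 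I expect the genuinely delicate points to be precisely this termination bookkeeping -- in particular making loop removal count as progress when the excised segment is a lone unary chain of negation inferences (the pattern of Example~\ref{ex:nonterm}), which is why the measure must also record the already-built $\m{G3}^{Hist}$ part -- together with checking that the ``reset the history exactly on proper context extension'' discipline really does force a matching context at the reattachment point.
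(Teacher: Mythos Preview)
Your proposal is correct and follows essentially the same hybrid-tree route as the paper's proof (which in turn enhances Howe's argument by adding the negation cases). The chief difference is that you supply an explicit termination measure for the hybrid-tree transformation—the paper simply defers this to~\cite{howe1998proof}—and your measure (lexicographic on the multiset of attached $\m{G3}^{\varphi}$-derivation heights, then on the number of internal $\m{G3}^{Hist}$ nodes) is sound; in particular, your point that loop removal leaves the first component unchanged while strictly shrinking the second is exactly what is needed for the unary negation chains of Example~\ref{ex:nonterm}. Your identification of the rules that can insert the current goal into the history ($\impllr$, $\nthree$, $\nfour$, $\nefr$, $\copctwo$) and the observation that the reattachment node $\mu$ and the current leaf carry the same history-free sequent because histories are reset precisely when the context properly grows, match the paper's case analysis. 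One minor terminological slip: in the root-at-bottom orientation used throughout, the segment you discard lies strictly \emph{above} $\mu$ (towards the leaves), not below; your intent is nevertheless clear.
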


\begin{proof}
From Lemma \ref{lemma:eq1} it is enough to show that $\m{G3}^{Hist}$ is equivalent to $\m{G3}^{\varphi}$. It is immediate that any sequent provable in $\m{G3}^{Hist}$ is also provable in $\m{G3}^{\varphi}$, just by dropping the history and, if necessary, using some contractions. For the other direction, we enhance the proof of~\cite[Theorem 4.1]{howe1998proof} by adding arguments for the negation rules where needed.

Suppose that the last inference rule applied is $\impllr$. If the side conditions are satisfied, then we follow the proof in~\cite{howe1998proof}. If not, we have 
\[
 \begin{array}{ccccccccc}
 \infer[\impllr]{\mh \h \Ga, \alpha\to\beta \Rightarrow \varphi}{\Ga, \alpha\to\beta \Rightarrow \alpha \qquad \Ga, \alpha\to\beta, \beta \Rightarrow \varphi} 
 \end{array}
\]
The only tricky case is when $\varphi \in \mh$. If the history condition is not met, we have to consider the additional cases in which $\varphi$ is $\neg\delta$. 

Suppose that below the conclusion the hybrid tree has the form: 
\[
 \begin{array}{ccccccccc}
 \infer[\impllr]{\mh \h \Ga, \alpha\to\beta \Rightarrow \neg\delta}{\Ga, \alpha\to\beta \Rightarrow \alpha \qquad \Ga, \alpha\to\beta, \beta \Rightarrow \neg\delta}
 \\
 \vdots
 \smallskip \\
 \infer[\nthree]{\mh' \h \Ga', \alpha\to\beta, \neg\gamma \Rightarrow \neg\delta}{(\neg\delta, \mh') \h \Ga', \alpha\to\beta, \neg\gamma \Rightarrow \gamma \qquad \ldots}
 \end{array}
\]
where $\neg\delta \not\in \mh'\subseteq\mh$ and $\Ga = \Ga'\cup\{\neg\gamma\}$. The new hybrid tree is obtained by removing from the hybrid tree described above all the sequents from, and not including, $\mh' \h \Ga', \alpha\to\beta, \neg\gamma \Rightarrow \neg\delta$ up to, and including, $\mh \h \Ga, \alpha\to\beta \Rightarrow \neg\delta$. We can now apply backwards $\impllr$ to the first of these sequents (we may need some contractions). 

Suppose that below the conclusion the hybrid tree has the form: 

\[
 \begin{array}{ccccccccc}
 \infer[\impllr]{\mh \h \Ga, \alpha\to\beta \Rightarrow \neg\delta}{\Ga, \alpha\to\beta \Rightarrow \alpha \qquad \Ga, \alpha\to\beta, \beta \Rightarrow \neg\delta}
 \\
 \vdots
 \smallskip \\
 \infer[\nfour]{\mh' \h \Ga', \alpha\to\beta, \neg\gamma \Rightarrow \neg\delta}{(\neg\delta, \mh') \h \Ga', \alpha\to\beta, \neg\gamma \Rightarrow \gamma \qquad \ldots}
 \end{array}
\]
where $\neg\delta \not\in \mh'\subseteq\mh$ and $\Ga = \Ga'\cup\{\neg\gamma\}$. The new hybrid tree is obtained by removing from the hybrid tree described above all the sequents from, and not including, $\mh' \h \Ga', \alpha\to\beta, \neg\gamma \Rightarrow \neg\delta$ up to, and including, $\mh \h \Ga, \alpha\to\beta \Rightarrow \neg\delta$. We can now apply backwards $\impllr$ to the first of these sequents (we may need some contractions).

Suppose now that the hybrid tree has the form:

\[
 \begin{array}{ccccccccc}
 \infer[\impllr]{\mh \h \Ga, \alpha\to\beta \Rightarrow \neg\delta}{\Ga, \alpha\to\beta \Rightarrow \alpha \qquad \Ga, \alpha\to\beta, \beta \Rightarrow \neg\delta}
 \\
 \vdots
 \smallskip \\
 \infer[\copctwo]{\mh' \h \Ga', \alpha\to\beta, \neg\gamma \Rightarrow \neg\delta}{(\neg\delta, \mh') \h \Ga', \alpha\to\beta, \neg\gamma \Rightarrow \gamma}
 \end{array}
\]

\noindent where $\neg\delta \not\in \mh' \subseteq \mh$ and $\Ga = \Ga'\cup\{\neg\gamma\}$. The new hybrid tree is obtained by removing from the hybrid tree described above all the sequents from, and not including, $\mh' \h \Ga', \alpha\to\beta, \neg\gamma \Rightarrow \neg\delta$ up to, and including, $\mh \h \Ga, \alpha\to\beta \Rightarrow \neg\delta$. We can now apply backwards $\impllr$ to the first of these sequents (we may need some contractions).

Finally, suppose that the hybrid tree has the following form:
\[
 \begin{array}{ccccccccc}
 \infer[\impllr]{\mh \h \Ga, \alpha\to\beta \Rightarrow \neg\delta}{\Ga, \alpha\to\beta \Rightarrow \alpha \qquad \Ga, \alpha\to\beta, \beta \Rightarrow \neg\delta}
 \\
 \vdots
 \smallskip \\
 \infer[\nefr]{\mh' \h \Ga', \alpha\to\beta, \neg\gamma \Rightarrow \neg\delta}{(\neg\delta, \mh') \h \Ga', \alpha\to\beta, \neg\gamma \Rightarrow \gamma}
 \end{array}
\]
where $\neg\delta \not\in \mh' \subseteq \mh$ and $\Ga = \Ga' \cup \{\neg\gamma\}$. The new hybrid tree is obtained by removing from the hybrid tree described above all the sequents from, and not including, $\mh' \h \Ga', \alpha\to\beta, \neg\gamma \Rightarrow \neg\delta$ up to, and including, $\mh \h \Ga, \alpha\to\beta \Rightarrow \neg\delta$. We can now apply backwards $\impllr$ to the first of these sequents (we may need some contractions).

Suppose now that the last step in the derivation is an application of $\nr$. If one among the side conditions of the history rules $\none, \ntwo, \nthree, \nfour$ is satisfied, we simply add the appropriate history to the premises and possibly apply contraction. Otherwise, we have the following situation:
\[
 \begin{array}{ccccccccc}
 \vdots
 \smallskip \\
 \infer[\nr]{\mh \h \Ga, \neg\alpha \Rightarrow \neg\beta}{\Ga, \neg\alpha, \beta \Rightarrow \alpha \qquad \Ga, \neg\alpha, \alpha \Rightarrow \beta}
 \end{array}
\]
with $\beta\in\Ga\cup\{\neg\alpha\}$ and $\neg\beta\in\mh$. Suppose that below the conclusion the hybrid tree has the form 

\[
 \begin{array}{ccccccccc}
 \infer[\nr]{\mh \h \Ga, \neg\alpha \Rightarrow \neg\beta}{\Ga, \neg\alpha, \beta \Rightarrow \alpha \qquad \Ga, \neg\alpha, \alpha \Rightarrow \beta}
 \\
 \vdots
 \smallskip \\
 \infer[\impllr]{\mh' \h \Ga', \gamma\to\delta, \neg\alpha \Rightarrow \neg\beta}{(\neg\beta, \mh') \h \Ga', \gamma\to\delta, \neg\alpha \Rightarrow \gamma \qquad \ldots}
 \end{array}
\]
where $\neg\beta \not\in \mh' \subseteq \mh$, $\Ga = \Ga'\cup\{\gamma\to\delta\}$. The new hybrid tree is obtained by removing from the hybrid tree described above all the sequents from, and not including, $\mh' \h \Ga', \gamma\to\delta, \neg\alpha \Rightarrow \neg\beta$ up to, and including, $\mh \h \Ga, \neg\alpha \Rightarrow \neg\beta$. We can now apply backwards one between $\nthree$ and $\nfour$ to the first of these sequents, depending on which of the side conditions are satisfied (we may need some contractions).

Suppose that below the conclusion the hybrid tree has the form 

\[
 \begin{array}{ccccccccc}
 \infer[\nr]{\mh \h \Ga, \neg\alpha \Rightarrow \neg\beta}{\Ga, \neg\alpha, \beta \Rightarrow \alpha \qquad \Ga, \neg\alpha, \alpha \Rightarrow \beta}
 \\
 \vdots
 \smallskip \\
 \infer[\nthree]{\mh' \h \Ga, \neg\alpha \Rightarrow \neg\beta}{(\neg\beta, \mh') \h \Ga, \neg\alpha \Rightarrow \alpha \qquad \ldots}
 \end{array}
\]
where $\neg\beta \not\in \mh'$ and $\mh'\subseteq\mh$. The new hybrid tree is obtained by removing from the hybrid tree described above all the sequents from, and not including, $\mh' \h \Ga, \neg\alpha \Rightarrow \neg\beta$ up to, and including, $\mh \h \Ga, \neg\alpha \Rightarrow \neg\beta$. We can now apply backwards $\nthree$ to the first of these sequents (we may need some contractions).

Suppose that below the conclusion the hybrid tree has the form 

\[
 \begin{array}{ccccccccc}
 \infer[\nr]{\mh \h \Ga, \neg\alpha \Rightarrow \neg\beta}{\Ga, \neg\alpha, \beta \Rightarrow \alpha \qquad \Ga, \neg\alpha, \alpha \Rightarrow \beta}
 \\
 \vdots
 \smallskip \\
 \infer[\nfour]{\mh' \h \Ga, \neg\alpha \Rightarrow \neg\beta}{(\neg\beta, \mh') \h \Ga, \neg\alpha \Rightarrow \alpha \qquad \ldots}
 \end{array}
\]
where $\neg\beta \not\in \mh'$ and $\mh'\subseteq\mh$. The new hybrid tree is obtained by removing from the hybrid tree described above all the sequents from, and not including, $\mh' \h \Ga, \neg\alpha \Rightarrow \neg\beta$ up to, and including, $\mh \h \Ga, \neg\alpha \Rightarrow \neg\beta$. We can now apply backwards $\nfour$ to the first of these sequents (we may need some contractions).

Suppose that below the conclusion the hybrid tree has the form 

\[
 \begin{array}{ccccccccc}
 \infer[\nr]{\mh \h \Ga, \neg\alpha \Rightarrow \neg\beta}{\Ga, \neg\alpha, \beta \Rightarrow \alpha \qquad \Ga, \neg\alpha, \alpha \Rightarrow \beta}
 \\
 \vdots
 \smallskip \\
 \infer[\nefr]{\mh' \h \Ga, \neg\alpha \Rightarrow \neg\beta}{(\neg\beta, \mh') \h \Ga, \neg\alpha \Rightarrow \alpha}
 \end{array}
\]
where $\neg\beta \not\in \mh'$ and $\mh'\subseteq\mh$. The new hybrid tree is obtained by removing from the hybrid tree described above all the sequents from, and not including, $\mh' \h \Ga, \neg\alpha \Rightarrow \neg\beta$ up to, and including, $\mh \h \Ga, \neg\alpha \Rightarrow \neg\beta$. We can now apply backwards one between $\nthree$ and $\nfour$ to the first of these sequents (we may need some contractions).

Suppose now that the last step in the derivation is an application of $\nefr$. If the side condition of the history rule $\nefr$ is satisfied, we simply add the appropriate history to the premises and possibly apply contraction. Otherwise, we have the following situation:

\[
 \begin{array}{ccccccccc}
 \vdots
 \smallskip \\
 \infer[\nefr]{\mh \h \Ga, \neg\alpha \Rightarrow \neg\beta}{\Ga, \neg\alpha \Rightarrow \alpha}
 \end{array}
\]
with $\neg\beta\in\mh$. Suppose that below the conclusion the hybrid tree has the form 

\[
 \begin{array}{ccccccccc}
 \infer[\nefr]{\mh \h \Ga, \neg\alpha \Rightarrow \neg\beta}{\Ga, \neg\alpha \Rightarrow \alpha}
 \\
 \vdots
 \smallskip \\
 \infer[\impllr]{\mh' \h \Ga', \gamma\to\delta, \neg\alpha \Rightarrow \neg\beta}{(\neg\beta, \mh') \h \Ga', \gamma\to\delta, \neg\alpha \Rightarrow \gamma \qquad \ldots}
 \end{array}
\]
where $\neg\beta \not\in \mh' \subseteq \mh$ and $\Ga = \Ga'\cup\{\gamma\to\delta\}$. The new hybrid tree is obtained by removing from the hybrid tree described above all the sequents from, and not including, $\mh' \h \Ga', \gamma\to\delta, \neg\alpha \Rightarrow \neg\beta$ up to, and including, $\mh \h \Ga, \neg\alpha \Rightarrow \neg\beta$. We can now apply backwards $\nefr$ to the first of these sequents (we may need some contractions).

Suppose that below the conclusion the hybrid tree has the form 

\[
 \begin{array}{ccccccccc}
 \infer[\nefr]{\mh \h \Ga, \neg\alpha \Rightarrow \neg\beta}{\Ga, \neg\alpha \Rightarrow \alpha}
 \\
 \vdots
 \smallskip \\
 \infer[\nthree]{\mh' \h \Ga, \neg\alpha \Rightarrow \neg\beta}{(\neg\beta, \mh') \h \Ga, \neg\alpha \Rightarrow \alpha \qquad \ldots}
 \end{array}
\]
where $\neg\beta \not\in \mh'$ and $\mh'\subseteq\mh$. The new hybrid tree is obtained by removing from the hybrid tree described above all the sequents from, and not including, $\mh' \h \Ga, \neg\alpha \Rightarrow \neg\beta$ up to, and including, $\mh \h \Ga, \neg\alpha \Rightarrow \neg\beta$. We can now apply backwards $\nefr$ to the first of these sequents (we may need some contractions).

Suppose that below the conclusion the hybrid tree has the form 

\[
 \begin{array}{ccccccccc}
  \infer[\nefr]{\mh \h \Ga, \neg\alpha \Rightarrow \neg\beta}{\Ga, \neg\alpha \Rightarrow \alpha}
 \\
 \vdots
 \smallskip \\
 \infer[\nfour]{\mh' \h \Ga, \neg\alpha \Rightarrow \neg\beta}{(\neg\beta, \mh') \h \Ga, \neg\alpha \Rightarrow \alpha \qquad \ldots}
 \end{array}
\]
where $\neg\beta \not\in \mh'$ and $\mh'\subseteq\mh$. The new hybrid tree is obtained by removing from the hybrid tree described above all the sequents from, and not including, $\mh' \h \Ga, \neg\alpha \Rightarrow \neg\beta$ up to, and including, $\mh \h \Ga, \neg\alpha \Rightarrow \neg\beta$. We can now apply backwards $\nefr$ to the first of these sequents (we may need some contractions).

Suppose that below the conclusion the hybrid tree has the form 

\[
 \begin{array}{ccccccccc}
 \infer[\nefr]{\mh \h \Ga, \neg\alpha \Rightarrow \neg\beta}{\Ga, \neg\alpha \Rightarrow \alpha}
 \\
 \vdots
 \smallskip \\
 \infer[\nefr]{\mh' \h \Ga, \neg\alpha \Rightarrow \neg\beta}{(\neg\beta, \mh') \h \Ga, \neg\alpha \Rightarrow \alpha}
 \end{array}
\]
where $\neg\beta \not\in \mh'$ and $\mh'\subseteq\mh$. The new hybrid tree is obtained by removing from the hybrid tree described above all the sequents from, and not including, $\mh' \h \Ga, \neg\alpha \Rightarrow \neg\beta$ up to, and including, $\mh \h \Ga, \neg\alpha \Rightarrow \neg\beta$. We can now apply backwards $\nefr$ to the first of these sequents (we may need some contractions).

Suppose now that the last inference rule is $\copc$. If the side condition of the history rule $\copcone$ is satisfied, we simply add the appropriate history to the premise. Else, we have 

\[
 \begin{array}{ccccccccc}
 \vdots
 \smallskip \\
 \infer[\copc]{\mh \h \Ga, \neg\alpha \Rightarrow \neg\beta}{\Ga, \neg\alpha, \beta \Rightarrow \alpha}
 \end{array}
\]

\noindent where $\beta \in \Ga\cup\{\neg\alpha\}$. If $\neg\beta \not \in \mh$, we simply apply $\copctwo$ backwards and add the appropriate history (we may need an application of contraction). If $\neg\beta \in \mh$, we need to consider two possibilities. 

Suppose that below the conclusion the hybrid tree has the form 

\[
 \begin{array}{ccccccccc}
 \infer[\copc]{\mh \h \Ga, \neg\alpha \Rightarrow \neg\beta}{\Ga, \neg\alpha, \beta \Rightarrow \alpha}
 \\
 \vdots
 \smallskip \\
 \infer[\impllr]{\mh' \h \Ga', \gamma\to\delta, \neg\alpha \Rightarrow \neg\beta}{(\neg\beta, \mh') \h \Ga', \gamma\to\delta, \neg\alpha \Rightarrow \gamma \qquad \ldots}
 \end{array}
\]

\noindent where $\neg\beta \not\in \mh' \subseteq \mh$ and $\Ga = \Ga'\cup\{\gamma\to\delta\}$. Similarly to the previous case, the new hybrid tree is obtained by removing from the hybrid tree described above all the sequents from, and not including, $\mh' \h \Ga', \gamma\to\delta, \neg\alpha \Rightarrow \neg\beta$ up to, and including, $\mh \h \Ga, \neg\alpha \Rightarrow \neg\beta$. We can now apply $\copctwo$ backwards to the first of these sequents (we may need some contractions). 

Suppose now that below the conclusion the hybrid tree has the form 

\[
 \begin{array}{ccccccccc}
 \infer[\copc]{\mh \h \Ga, \neg\alpha \Rightarrow \neg\beta}{\Ga, \neg\alpha, \beta \Rightarrow \alpha}
 \\
 \vdots
 \smallskip \\
 \infer[\copctwo]{\mh' \h \Ga, \neg\alpha \Rightarrow \neg\beta}{(\neg\beta, \mh') \h \Ga, \neg\alpha \Rightarrow \alpha}
 \end{array}
\]

\noindent where $\neg\beta \not\in \mh'$ and $\mh'\subseteq\mh$. The new hybrid tree can be obtained now by removing from the old hybrid tree all the sequents from, and including, $(\neg\beta, \mh') \h \Ga, \neg\alpha \Rightarrow \alpha$ up to, and including, $\mh \h \Ga, \neg\alpha \Rightarrow \neg\beta$. We can again proceed with an application of $\copctwo$ backwards to the first newly obtained sequent (we may need some contractions).

Finally, suppose that the last inference rule is $\anr$. If the side condition of the history rule $\anr$ is satisfied, we simply add the appropriate history to the premise. Otherwise, from the point of view of looping, both the premise and the conclusion are the same and we are in the presence of a trivial loop. The new hybrid tree is simply the old one with the premise obtained by contraction. 
\end{proof}

\begin{theorem}
Backwards proof search in the calculus $\m{G3}^{Hist}$ is terminating.
\end{theorem}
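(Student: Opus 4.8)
The plan is to attach to every sequent-with-history $\mh\h\Ga\Rightarrow\varphi$ a measure in $\N^3$, ordered lexicographically, that strictly decreases when any rule of $\m{G3}^{Hist}$ is read bottom-up, from conclusion to each premise. Since $(\N^3,<_{\mathrm{lex}})$ is well-founded and every rule has at most two premises, K\"onig's lemma then forces every backwards proof-search tree to be finite; and since only finitely many rule instances are backwards-applicable to a given sequent (a left rule acts on one of the finitely many antecedent formulas, a right rule on the fixed goal, and the decomposition is then determined), this yields termination.

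The first ingredient is the \emph{subformula property} for $\m{G3}^{Hist}$: inspecting Figures~\ref{fig:ghist1} and~\ref{fig:ghist2}, in every rule each formula occurring in a premise --- in the context, as the goal, or newly placed into the history (which is always the goal of the conclusion) --- is a subformula of a formula occurring in the conclusion. By induction on the tree, every formula occurring anywhere in a proof-search tree rooted at $\emptyset\h\Ga_0\Rightarrow\varphi_0$ lies in the finite set $S$ of subformulas of the formulas in $\Ga_0,\varphi_0$. Writing $\overline{\Ga}$ for the underlying set of the multiset $\Ga$, we thus always have $\overline{\Ga}\subseteq S$ and $\mh\subseteq S$, so $|S|-|\overline{\Ga}|$ and $|S|-|\mh|$ are non-negative integers.

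The second, central ingredient is a trichotomy, to be verified against the side conditions: passing bottom-up from a conclusion $\mh\h\Ga\Rightarrow\varphi$ to any premise $\mh'\h\Ga'\Rightarrow\varphi'$, one of (A) $\overline{\Ga}\subsetneq\overline{\Ga'}$; or (B) $\overline{\Ga}=\overline{\Ga'}$ and $\mh\subsetneq\mh'$; or (C) $\overline{\Ga}=\overline{\Ga'}$, $\mh=\mh'$ and $w(\varphi')<w(\varphi)$ holds. Indeed, no rule ever deletes a context formula; the context-extending premises ($\implrrone$; the right premise of $\impllr$; $\conjlrone$, $\conjlrtwo$, $\disjlr$; $\none$; the left premise of $\ntwo$; the right premise of $\nthree$; $\copcone$; $\anr$) each adjoin a formula which, by the rule's side condition together with a syntactic check that it differs from the principal formula, is new to $\overline{\Ga}$, so they fall under (A) and reset the history to $\emptyset$; the history-extending premises (the left premise of $\impllr$; the left premises of $\nthree$ and $\nfour$; $\nefr$; $\copctwo$) keep the context fixed and adjoin to $\mh$ a goal formula which the side condition guarantees is not yet in $\mh$, so they fall under (B); and the remaining premises ($\implrrtwo$; $\conjrr$; $\disjrrone$, $\disjrrtwo$; the right premises of $\ntwo$ and $\nfour$) leave context and history untouched while strictly decreasing the weight of the goal, so they fall under (C).

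Combining the two ingredients, set $\mu(\mh\h\Ga\Rightarrow\varphi)=\bigl(|S|-|\overline{\Ga}|,\;|S|-|\mh|,\;w(\varphi)\bigr)$. By the trichotomy $\mu$ strictly decreases lexicographically from the conclusion to each premise of every rule of $\m{G3}^{Hist}$: case (A) drops the first component, case (B) fixes the first and drops the second, case (C) fixes the first two and drops the third. Well-foundedness of $<_{\mathrm{lex}}$ on $\N^3$ then precludes infinite branches (indeed it bounds branch length by a polynomial in $|S|$), and K\"onig's lemma together with the finiteness of the set of backwards-applicable rule instances gives that the whole search space is finite, hence terminating. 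I expect the main obstacle to be the bookkeeping in the trichotomy: one must confirm that each side condition really forces the claimed \emph{strict} growth of $\overline{\Ga}$ or of $\mh$ (so that, for instance, a formula added to the context is genuinely absent and not merely a repeated multiset element), and that the reset of the history to $\emptyset$ is exactly synchronized with proper extensions of the context, so that cases (A)--(C) are jointly exhaustive over all premises of all rules.
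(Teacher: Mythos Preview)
Your proposal is correct and takes essentially the same approach as the paper: both define a lexicographic triple $(|S|-|\overline{\Ga}|,\,|S|-|\mh|,\,w(\varphi))$ and observe that every backwards step strictly decreases it. The paper condenses everything into a single sentence (``It is easy to check that all the rules are such that the weight for the premises is lower than the one for the conclusion''), whereas you actually carry out the rule-by-rule trichotomy; your fixing of $S$ at the root is slightly cleaner than the paper's apparent use of the subformula count of the current sequent, but the two measures behave identically under the subformula property.
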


\begin{proof}
Consider the weight of a sequent defined as a triple 
\[
(k-n,k-m,w)
\] 
of natural numbers, where $k$ is the number of subformulas of $\Gamma \cup \{\varphi\}$ computed as a set, $n$ is the number of formulas of $\Gamma$ computed as a set, $m$ is the number of formulas of $\mh$, and $w$ is the weight of $\varphi$. It is easy to check that all the rules are such that the weight for the premises is lower than the one for the conclusion. 
\end{proof}

We exploit now the termination property of our sequent calculus and establish a bound on the complexity of the decision problem for the considered logical systems.

\begin{theorem}\label{t:pspace}
There exists an algorithm for backwards proof search in the calculus $\m{G3}^{Hist}$ that terminates in polynomial space.
\end{theorem}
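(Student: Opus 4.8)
The plan is to run the terminating backward proof search of $\m{G3}^{Hist}$ as a deterministic depth-first procedure and to bound its working space. Two ingredients are needed: (i) every sequent that can occur in the search is of size polynomial in the input, and (ii) every branch of the search tree has length polynomial in the input --- statement (ii) being precisely the quantitative content of the termination theorem just proved. Given (i) and (ii), a depth-first exploration of the search tree uses space proportional to the product of the maximal branch length and the size of a single sequent, hence polynomial space, independently of the (in general exponential) total size of the tree: only the current path from the root is ever held in memory.

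For (i) I would first record the subformula property of $\m{G3}^{Hist}$: in each rule, read upwards, the context, the goal, and the entries of the history of every premise consist of subformulas of the context and goal of the conclusion. Consequently, starting from $\emptyset \h \Ga_0 \Rightarrow \varphi_0$, every sequent $\mh \h \Ga \Rightarrow \varphi$ reachable in the search has $\Ga$, $\mh$ and $\varphi$ drawn from the at most $k$ subformulas of $\Ga_0,\varphi_0$; using admissibility of contraction in $\m{G3}^{Hist}$ (established above) the search may keep $\Ga$ as a set, so $|\Ga|,|\mh|\le k$, and such a sequent is represented in space polynomial in $|\Ga_0,\varphi_0|$.

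For (ii) I would extract the depth bound from the termination theorem. There the weight of a sequent is the triple $(k-n,k-m,w)$, where $0\le n,m\le k$ and $w=w(\varphi)$ is bounded by the weight of the input, so each coordinate is bounded by a polynomial in the input size; and every rule strictly decreases this triple in the lexicographic order from conclusion to premise. A strictly decreasing chain in a lexicographically ordered product $\{0,\dots,N_1\}\times\{0,\dots,N_2\}\times\{0,\dots,N_3\}$ has length at most $(N_1+1)(N_2+1)(N_3+1)$, so every path of consecutive backward rule applications --- equivalently, every branch of the whole search tree, not merely of a successful proof --- has length at most a fixed polynomial $p(|\Ga_0,\varphi_0|)$.

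The algorithm is then a standard depth-first search of the AND--OR search tree: at an axiom it reports success; otherwise it enumerates the applicable rule instances --- there are $O(k)$ of them, a rule instance being determined by its principal formula (one of the $\le k$ context formulas, or the goal) together with a bounded choice of rule and of premise-scheme --- and for each in turn recurses on the premises from left to right, succeeding iff some instance has all its premises provable. The only persistent state is the current root-to-node path: a stack of at most $p(|\Ga_0,\varphi_0|)$ frames, each holding one sequent (of polynomial size, by the subformula bound) together with an $O(\log k)$-bit index recording which rule instance and which premise is currently being tried; everything outside the current path is discarded and its space reused. Hence the total working space is at most (depth) $\times$ (frame size), a polynomial, and by the preceding termination theorem the recursion halts with the correct verdict; together with the equivalence $\m{G3}^{Hist}\equiv\m{G3}$ and soundness and completeness this places {\sf N}, {\sf NeF}, {\sf CoPC} and {\sf MPC} in polynomial space, with matching hardness inherited from the implicational fragment of intuitionistic logic via the $\varphi^{\sim}$-translation of Section~\ref{s:3}, giving $\mathrm{PSPACE}$-completeness. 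The one delicate point I anticipate is confirming that the depth bound is genuinely uniform --- that the weight triple is polynomially bounded and strictly drops for every rule, including those (like \ntwo, \nfour, \implrrtwo) that alter neither context nor history and rely on the third coordinate, and those (like \none, \nthree) that reset the history while properly extending the context --- but this is exactly the case analysis already carried out for the termination theorem, so the remaining work is bookkeeping rather than a new idea; the genuinely new content is the organisation of proof search as a bounded-depth depth-first search and the accounting that keeps the stack polynomial.
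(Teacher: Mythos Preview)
Your argument is correct and mirrors the paper's own proof: bound the branch length by the lexicographic decrease of the triple $(k-n,k-m,w)$, bound the sequent size via the subformula property, and observe that depth-first search only stores the current branch. The paper is terser---it simply records the bounds $w^3$ for branch length, $2w$ for branching, $O(w^2)$ for sequent size, and concludes $O(w^5)$ total space---but the idea is identical.

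One small correction to your closing remark, which strays into the subsequent corollary: PSPACE-hardness is not obtained via the $\varphi^{\sim}$-translation (that translation goes from {\sf MPC} into {\sf CoPC}/{\sf N}, and is not polynomial); rather, hardness follows because each of these logics is a conservative extension of the implicational fragment of intuitionistic logic, which is already PSPACE-hard by Statman's result.
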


\begin{proof}
Given a formula $\varphi$, we want to search for a proof of the sequent $\emptyset\ |\ \emptyset \Rightarrow \varphi$. We use a standard argument, inspecting the proof-search tree. If the weight of $\varphi$ is $w$, the size of $\emptyset\ |\ \emptyset \Rightarrow \varphi$ is (at most) $(w, w, w)$, and decreases with every step of the search. Hence, it is enough to observe the following: the length of a branch in the proof-search tree is bounded by $w^3$; the branching degree of the tree is bounded by $2w$; the number of formulas contained in any sequent occurring during the search can be seen to be bounded by $O(w^2)$. Therefore, the result follows. More precisely, as we in fact always need to remember just one whole branch, an upper bound of the needed space is $O(w^5)$.
\end{proof}

Statman provides, in his seminal paper~\cite{statman1979intuitionistic}, a polynomial translation of intuitionistic logic into its implicational fragment. As {\sf IPC} is PSPACE-hard, so is its implicational fragment (see also~\cite{R08}). Therefore, any conservative extension of the implicational fragment of {\sf IPC} is PSPACE-hard as well, including logics considered in this paper\footnote{It has been known before that {\sf MPC} is PSPACE-complete, and we include it here for the sake of completeness. Statman's polynomial translation from~\cite{statman1979intuitionistic} can in fact be seen as one from {\sf IPC} into \mpc. Another polynomial translation of {\sf IPC} into \mpc, which makes use of the constant $f$, was found in a letter from Johansson to Heyting from 1935~\cite{vandermolen} and is discussed in~\cite{colacito2016subminimal}. Note that {\sf MPC} can be in turn translated into {\sf CoPC} (see the remark after Proposition 4.2), but this translation is not polynomial.}. To sum up:

\begin{corollary}
The satisfiability problem for either one of the logical systems {\sf N}, {\sf NeF}, {\sf CoPC} or {\sf MPC} is PSPACE-complete. 
\end{corollary}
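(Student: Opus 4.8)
The plan is to put together two halves already prepared in the text---the upper bound coming from the terminating, polynomial-space calculi $\m{G3}^{Hist}$, and the matching lower bound coming from Statman's translation---the only remaining step being to make explicit the conservativity that links them. For membership in PSPACE: by Theorem~\ref{t:completeness}, Lemma~\ref{lemma:eq1}, and the equivalence of $\m{G3}$ and $\m{G3}^{Hist}$ proved above, a formula $\varphi$ is a theorem of one of {\sf N}, {\sf NeF}, {\sf CoPC}, {\sf MPC} if and only if the sequent $\emptyset\h\emptyset\Rightarrow\varphi$ is provable in the corresponding $\m{G3}^{Hist}$ system, and Theorem~\ref{t:pspace} provides a backwards proof-search algorithm for the latter running in polynomial space. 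Since PSPACE is closed under complementation, the set of theorems of each of these logics and its complement both lie in PSPACE, so whichever of the two one chooses to call the ``satisfiability problem'' lies in PSPACE.

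For PSPACE-hardness I would use the observation made just before the statement: {\sf IPC} is PSPACE-hard, hence so is its purely implicational fragment by Statman's polynomial translation~\cite{statman1979intuitionistic}. It then suffices to check that the identity map is a (trivial, hence polynomial) reduction of provability in the implicational fragment of {\sf IPC} into provability in each of the four logics. One direction is immediate: every purely implicational {\sf IPC}-theorem is already a theorem of positive logic, and all four systems extend positive logic. The converse---conservativity over the implicational fragment---is precisely where the cut-free systems of Figure~\ref{fig:g} do the work: if every formula occurring in a sequent is built only from propositional variables, $\to$ and $\top$, then, by cut elimination and the subformula property, it has a derivation no formula of which contains $\neg$; consequently none of the rules $\nr$, $\nefr$, $\copc$, $\anr$ is applied, the derivation already lives in the positive (equivalently, implicational) fragment, and that fragment coincides with the one of {\sf IPC}. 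So the four logics prove exactly the same implicational formulas as {\sf IPC}, and the PSPACE lower bound transfers verbatim.

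Combining the two bounds gives PSPACE-completeness for all four systems. The point I would be most careful about---rather than a real obstacle---is the reading of ``satisfiability'': since {\sf N}, {\sf NeF} and {\sf CoPC} contain no constant $\bot$ and validate no explosion principle strong enough to trivialise a theory, ``$\varphi$ holds somewhere in some model'' is not the intended question; the statement concerns the decision problem for the logic, which by the above is in PSPACE and PSPACE-hard, and by closure of PSPACE under complementation the precise choice of variant is immaterial. Everything else needed---cut elimination and the subformula property of the $\m{G3}$ systems, and the coincidence of their positive fragments with positive logic---is stated in, or an immediate consequence of, the earlier sections.
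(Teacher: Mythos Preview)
Your proposal is correct and follows essentially the same two-step route as the paper: the PSPACE upper bound via Theorem~\ref{t:pspace} and the terminating calculi $\m{G3}^{Hist}$, and the lower bound via PSPACE-hardness of the implicational fragment of {\sf IPC} together with conservativity of the four logics over that fragment. The paper simply asserts the conservativity claim, whereas you supply the argument (cut elimination plus the subformula property forces any proof of a $\neg$-free sequent to stay in the positive calculus); this is an elaboration rather than a different approach, and your remarks on the reading of ``satisfiability'' and closure of PSPACE under complement are likewise refinements the paper leaves implicit.
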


\begin{proof}
The inclusion in PSPACE is Theorem~\ref{t:pspace}. Hardness follows from the fact that all the logics we consider are conservative extensions of the implication fragment of intuitionistic logic, known to be PSPACE-hard~\cite{statman1979intuitionistic,R08}.
\end{proof}

\section{Conclusions and Further Research}\label{s:conclusion}

The question whether the logics addressed in this paper enjoy the property of uniform interpolation remains open. We conclude the article with some comments and remarks on this problem.

Uniform interpolation is a strengthening of the Craig interpolation property, claiming that for each formula $\phi(\vec{q},\vec{p})$ there exists a post-interpolant $\exists \vec{q}\phi(\vec{p})$ such that: (1) $\phi\to\exists \vec{q}\phi$ is provable, and (2) for each $\psi(\vec{p},\vec{r})$ with $\phi\to\psi$ provable, also $\exists \vec{q}\phi\to\psi$ is provable. Symmetrically, for each $\psi(\vec{p},\vec{r})$ there exists a pre-interpolant $\forall \vec{r}\psi(\vec{p})$ such that: (1) $\forall \vec{r}\psi\to\psi$ is provable, and (2) for each $\phi(\vec{q},\vec{p})$ with $\phi\to\psi$ provable, $\phi\to\forall\vec{r}\psi$ is provable as well. Note that here we deliberately confuse the existence of a post-interpolant and a pre-interpolant with the possibility of simulating propositional quantification.
In terms of the Craig interpolation property, uniform interpolation means that there are a least ($\exists \vec{q}\phi$) and a greatest ($\forall \vec{r}\psi$) interpolant with respect to `provability order', for each provable implication $\phi(\vec{q},\vec{p})\to\psi(\vec{p},\vec{r})$. The interpolants are unique up to provable equivalence.

Uniform interpolation for intuitionistic propositional logic has been originally proved by Pitts in~\cite{Pitts92}, using proof-theoretical means. As for positive logic, the pre-interpolation property does not hold in full generality. Nevertheless, the `failing' cases are of the following form.~Given the formula $p \to r$, its pre-interpolant $\forall r(p \to r)$ in intuitionistic logic is equivalent to the formula $\neg p$, and there is no formula in the language of positive logic containing only $p$ that does the job: for every formula $\phi(p,q)$ in the positive language, $\phi(p,q)\to (p \to r)$ is not valid. This is a typical pathological case, and in all the other cases, the pre-interpolant exists. This result was proved in~\cite{dJZ13}, where de Jongh and Zhao obtained uniform interpolation for positive logic, and for {\sf MPC} (in the language with the constant $f$); the {\sf MPC} uniform interpolant is obtained from the one of intuitionistic logic, by means of a clever construction of a positive content $\varphi^+$ associated to a formula $\varphi$.

We focus here on the possibility of proving uniform interpolation for the logic defined by the contraposition axiom. Also in this case, pre-interpolants do not always exist. In fact, the same example as in the case of positive logic does the job: if we consider again $\forall r(p \to r)$, no formula containing only the variable $p$ can work as an interpolant. Similarly to the {\sf MPC} case, it is also true that $\phi(p,q)\to (p \to r)$ does not hold for any $\phi(p,q)$ in the language. One is therefore tempted to conjecture that the revised {\sf MPC} version of uniform (pre-)interpolation could go through in this setting, and try to derive the uniform interpolation property for the logic {\sf CoPC} (and possibly, for {\sf NeF}) from that of {\sf MPC}. However, some simple differences between the two cases are clear: for instance, observe that $\forall r(p\to\neg r)\leftrightarrow \neg p$ holds in {\sf MPC}, while $\forall r(p\to\neg r) \leftrightarrow (p\to\neg p)$ holds only in the presence of the contraposition axiom, and moreover $p\to\neg p$ is the translation of the {\sf MPC} interpolant by the $\sim$ translation: $(\neg p)^{\sim} = (p\to\neg p)$ (see the remark after Proposition~\ref{p:negneg}). 
It would however be naive to hope that the uniform interpolants in the contraposition case can always be obtained by translation. In fact, a counterexample is given by the following observation: it is possible to construct a uniform pre-interpolant $\forall r(\neg p\wedge q\to\neg r)$ for the formula $(\neg p\wedge q\to\neg r)$ in the logic defined by the contraposition axiom, such interpolant being of the form $\neg p\to (q\to\neg q)$; however, the latter is not (equivalent to) the translation of the corresponding {\sf MPC} interpolant $\neg p\to \neg q$. 

We have not been able so far to prove or refute the uniform interpolation property for any of the logics {\sf N}, {\sf NeF}, and {\sf CoPC}, and we leave it for further considerations, algebraic or proof-theoretic.

 



\begin{thebibliography}{10}

\bibitem{moraschinisurjectivity}
\textsc{Bezhanishvili, G.}, \textsc{T.~Moraschini}, and \textsc{J.~G. Raftery},
  `Epimorphisms in varieties of residuated structures', \emph{Journal of
  Algebra}, 492 (2017), 185--211.

\bibitem{BCdJ18}
\textsc{Bezhanishvili, N.}, \textsc{A.~Colacito}, and \textsc{D.~de~Jongh}, `A
  study of subminimal logics of negation and their modal companions', 12th
  International Tbilisi Symposium on Logic, Language, and Computation. To
  appear.

\bibitem{birkhoff1967lattice}
\textsc{Birkhoff, G.}, \emph{Lattice theory}, vol.~25, {C}olloquium
  {P}ublications edn., American Mathematical Society, 1967.

\bibitem{caicedo2001algebraic}
\textsc{{C}aicedo, X.}, and \textsc{R.~{C}ignoli}, `{A}n algebraic approach to
  intuitionistic connectives', \emph{The Journal of Symbolic Logic}, 66 (2001),
  04, 1620--1636.

\bibitem{ciardelli2012inquisitive}
\textsc{Ciardelli, I.}, \textsc{J.~Groenendijk}, and \textsc{F.~Roelofsen},
  `Inquisitive semantics', \emph{NASSLLI {L}ecture {N}otes}, 187 (2012).

\bibitem{ciardelli2013inquisitive}
\textsc{Ciardelli, I.}, \textsc{J.~Groenendijk}, and \textsc{F.~Roelofsen},
  `Inquisitive semantics: a new notion of meaning', \emph{Language and
  Linguistics Compass}, 7 (2013), 9, 459--476.

\bibitem{Colacito:Thesis:2016}
\textsc{Colacito, A.}, \emph{{M}inimal and {S}ubminimal {L}ogic of {N}egation},
  Master's thesis, ILLC Master of Logic Series MoL-2016-14, Universiteit van
  Amsterdam.

\bibitem{colacito2016subminimal}
\textsc{Colacito, A.}, \textsc{D.~de~Jongh}, and \textsc{A.~L. Vargas},
  `Subminimal negation', \emph{Soft Computing}, 21 (2017), 1, 165 -- 174.

\bibitem{dJZ13}
\textsc{de~Jongh, D.}, and \textsc{Z.~Zhao}, `Positive formulas in
  intuitionistic and minimal logic', in \emph{International Tbilisi Symposium
  on Logic, Language, and Computation}, Springer, 2013, pp. 175--189.

\bibitem{ertola2007compatible}
\textsc{Ertola, R.~C.}, \textsc{A.~Galli}, and \textsc{M.~Sagastume},
  `Compatible functions in algebras associated to extensions of positive
  logic', \emph{Logic Journal of IGPL}, 15 (2007), 1, 109--119.

\bibitem{ertola2009subminimal}
\textsc{Ertola, R.~C.}, and \textsc{M.~Sagastume}, `Subminimal logic and weak
  algebras.', \emph{Reports on Mathematical Logic}, 44 (2009), 153--166.

\bibitem{groenendijk2009inquisitive}
\textsc{Groenendijk, J.}, and \textsc{F.~Roelofsen}, `Inquisitive semantics and
  pragmatics', in \emph{Workshop on Language, Communication, and Rational
  Agency at Stanford}, 2009.

\bibitem{GN11}
\textsc{Gurevich, Y.}, and \textsc{I.~Neeman}, `Logic of infons: The
  propositional case', \emph{ACM Transactions on Computational Logic (TOCL)},
  12 (2011), 2, 9.

\bibitem{Haz95}
\textsc{Hazen, A.~P.}, `Is even minimal negation constructive?',
  \emph{Analysis}, 55 (1995), 105--107.

\bibitem{Heu98}
\textsc{Heuerding, A.}, \emph{{S}equent {C}alculi for {P}roof {S}earch in
  {S}ome {M}odal {L}ogics}, Ph.D. thesis, Universit\"at Bern, 1998.

\bibitem{heuerding1996efficient}
\textsc{Heuerding, A.}, \textsc{M.~Seyfried}, and \textsc{H.~Zimmermann},
  `Efficient loop-check for backward proof search in some non-classical
  propositional logics', in \emph{International Workshop on Theorem Proving
  with Analytic Tableaux and Related Methods}, Springer, 1996, pp. 210--225.

\bibitem{How96}
\textsc{Howe, J.~M.}, `Theorem proving and partial proof search for
  intuitionistic propositional logic using a permutation-free calculus with
  loop-checking', Technical Report CS/96/12, University of St Andews, 1996.

\bibitem{How97}
\textsc{Howe, J.~M.}, `Two loop detection mechanisms: a comparison', in
  \emph{International Conference on Automated Reasoning with Analytic Tableaux
  and Related Methods}, Springer, 1997, pp. 188--200.

\bibitem{howe1998proof}
\textsc{Howe, J.~M.}, \emph{{P}roof {S}earch {I}ssues in {S}ome {N}on-classical
  {L}ogics}, Ph.D. thesis, University of St Andrews, 1998.

\bibitem{Hum11}
\textsc{Humberstone, L.}, \emph{The {C}onnectives}, MIT Press, Cambridge, Ma,
  US, 2011.

\bibitem{johansson1937minimalkalkul}
\textsc{Johansson, I.}, `{D}er {M}inimalkalk{\"u}l, ein reduzierter
  intuitionistischer {F}ormalismus', \emph{Compositio mathematica}, 4 (1937),
  119--136.

\bibitem{kolmogorov1925principle}
\textsc{Kolmogorov, A.~N.}, `On the principle of excluded middle',
  \emph{Matematicheskii Sbornik}, 32 (1925), 24, 646--667.

\bibitem{M60}
\textsc{Maehara, S.}, `On the interpolation theorem of {C}raig',
  \emph{S{\^u}gaku}, 12 (1960), 4, 235--237.

\bibitem{odintsov2008constructive}
\textsc{Odintsov, S.}, \emph{{C}onstructive {N}egations and {P}araconsistency},
  vol.~26 of \emph{Trends in Logic}, Springer Science \& Business Media, 2008.

\bibitem{Pitts92}
\textsc{Pitts, A.~M.}, `On an interpretation of second order quantification in
  first order intuitionistic propositional logic', \emph{The Journal of
  Symbolic Logic}, 57 (1992), 1, 33--52.

\bibitem{punvcochavr2015weak}
\textsc{Pun{\v{c}}och{\'a}{\v{r}}, V.}, `Weak negation in inquisitive
  semantics', \emph{Journal of Logic, Language and Information}, 24 (2015), 3,
  323--355.

\bibitem{rasiowa1974algebraic}
\textsc{Rasiowa, H.}, \emph{An Algebraic Approach to Non-classical Logics},
  Studies in {L}ogic and the {F}oundations of {M}athematics, North-Holland
  Publishing Company, 1974.

\bibitem{R08}
\textsc{Rybakov, M.}, `Complexity of intuitionistic propositional logic and its
  fragments', \emph{Journal of Applied Non-Classical Logics}, 18 (2008), 2-3,
  267--292.

\bibitem{SCH62}
\textsc{Sch{\"u}tte, K.}, `Der interpolationssatz der intuitionistischen
  pr{\"a}dikatenlogik', \emph{Mathematische Annalen}, 148 (1962), 3, 192--200.

\bibitem{statman1979intuitionistic}
\textsc{Statman, R.}, `Intuitionistic propositional logic is polynomial-space
  complete', \emph{Theoretical Computer Science}, 9 (1979), 1, 67--72.

\bibitem{troelstra2000basic}
\textsc{Troelstra, A.~S.}, and \textsc{H.~Schwichtenberg}, \emph{{B}asic
  {P}roof {T}heory}, no.~43 in Cambridge Tracts in Theoretical Computer
  Science, Cambridge University Press, 2000.

\bibitem{vandermolen}
\textsc{van~der Molen, T.}, `The {J}ohansson/{H}eyting letters and the birth of
  minimal logic', ILLC Technical Notes X-2016-03, Universiteit van Amsterdam.

\end{thebibliography}

\vskip1cm

\end{document}